\newtheorem{teo}{Theorem}
\newtheorem{prop}{Proposition}
\newtheorem{lemma}{Lemma}
\newcommand{\R}{{\mathbb{R}}}
\newcommand{\C}{{\mathbb{C}}}
\newcommand{\Z}{{\mathbb{Z}}}
\newcommand{\N}{{\mathbb{N}}}
\newcommand{\T}{{\mathbb{T}}}
\newcommand{\Of}{\mathcal{O}_f}
\begin{document}
\title{The growth rate inequality for Thurston maps with non hyperbolic orbifolds.}
\author{J.Iglesias, A.Portela, A.Rovella and J.Xavier}

\begin{abstract} Let $f: S^2 \to S^2$ be a continuous map of degree $d$, $|d|>1$, and let $N_nf$ denote the number of fixed points 
of $f^n$. We show that if $f$
is a  Thurston map with non hyperbolic orbifold, then either the growth rate inequality $\limsup \frac{1}{n} \log N_nf\geq \log |d|$ holds for $f$ or 
$f$ has exactly two  critical points which are fixed and totally invariant.

\end{abstract}

\maketitle

\section{Introduction}

For some time we have been interested in the following open problem:  let $f: S^2 \to S^2$ be a continuous map of degree $d$, $|d|>1$, and let $N_nf$ denote the number of fixed points 
of $f^n$. When does
the growth rate inequality $\limsup \frac{1}{n} \log N_nf\geq \log |d|$ hold for $f$? (This is Problem 3 posed in \cite{shub2}).  There are many recent papers written on the subject,
where hypothesis on the set of recurrent points are imposed in order to guarantee the desired growth rate. Note that in the total absence of recurrence, the inequality is easily
seen to fail, as the extension to $S^2$ fixing infinity of the planar map in polar coordinates $(r,\theta)\mapsto (2r, 2\theta)$ shows. Also the existence of certain invariant foliations
has been shown to guarantee the growth rate.\\

In this work we take another direction and focus  on the structure of the postcritical set: are there hypothesis on this set that guarantee the desired growth rate? We show that
for Thurston maps with non hyperbolic orbifolds either the growth rate is satisfied, or the structure of the postcritical set is exactly as in the counterexample described above:
there are exactly two critical points which are fixed and totally invariant.  See the next section for the definitions.\\

Thurston maps with non hyperbolic orbifolds were completely classified in \cite{dh}. It turns out that these maps can be lifted to degree $d$ covering maps $(|d|>1)$ of either the open annulus or the torus, or even act directly on the open annulus by restriction (for example, when
there are exactly two totally invariant critical points as in the map $(r,\theta)\mapsto (2r, 2\theta)$ described above).  The theory of torus coverings is of course well understood,
even from the point of view of the growth rate inequality, because of compactness and homological considerations.  The theory of self coverings of the open annulus was developed
by the authors, very much focusing on the growth rate inequality. The lack of compactness is of course an issue here. As a consequence, these very special maps (Thurston maps with
non hyperbolic orbifolds) are a natural enviroment to tackle this elusive growth rate problem.\\

Some of the results presented here are not exactly new.  For example, it was shown in \cite{Li} that each {\it expanding} Thurston map has $\deg f +1$ fixed points, counted with 
appropriate weight.  The definition of an expanding Thurston map is rather technical and we will not discuss it here.  Nevertheless, we point out that we do not assume our maps to be expanding, but
more importantly, even when they are expanding, our proofs are entirely different.  We give purely topological proofs of the results, using only elementry algebraic topology and 
the Lefschetz fixed point theorem.

\section{Thurston maps with non hyperbolic orbifolds.}

If $f:S^2\to S^2$ is a branched covering, we denote by $\deg_xf$ the local 
degree of $f$ at $x$.  We will call $S_f=\{x|\deg_xf>1\}$ the critical set of $f$, and $P_f=\cup_{n>0}f^n(S_f)$ the postcritical set.\\

 A Thurston map is an orientation preserving branched covering of the 
sphere onto itself such that the postcritical set $P_f$ is finite. The ramification function $\nu_f$  of a Thurston map $f$ is 
the smallest among functions $\nu:S^2\to \N^*\cup\{\infty\}$ such that \begin{itemize}
                                                                         \item $\nu(x)=1$ if $x\notin P_f$
                                                                         \item $\nu(x)$ is a multiple of $\nu(y)\deg_y(f)$ for each $y\in f^{-1}(x)$.\\

                                                                        \end{itemize}
                                                                        
The orbifold associated to a Thurston map $f$ is the pair $(\mathcal{O}_f, \nu_f)$. Note that $\{p\in S^2:\nu_f(p) \geq 2\}=P_f$ is a finite set.  If we label these points
$p_1,\dots,p_n$ such that $2\leq\nu_f(p_1)\leq\ldots \leq \nu_f(p_n)$, then the $n$-tuple $(\nu_f(p_1), \ldots, \nu_f(p_n))$ is called the signature of $(\mathcal{O}_f, \nu_f)$.
Thus the numbers appearing in the $n$-tuples are the 
values of the ramification
function, which assign ``weights'' to points in $P_f$.\\

The Euler characteristic of $(\mathcal{O}_f, \nu_f)$ is 
$$\chi (O_f)=2-\sum_{x\in P_f} (1-\frac{1}{\nu_f(x)}).$$ 

A Thurston map has a non hyperbolic orbifold if $\chi (O_f)=0$. A Thurston map has a non hyperbolic orbifold if and only if $\deg_pf . \nu_f(p) = \nu_f(f(p))$ for all $p\in S^2$
and if and only if the signature of $\Of$ is   $(\infty, \infty)$, $(2,2,\infty)$, $(2,4,4)$, $(2,3,6)$, $(3,3,3)$ or
$(2,2,2,2)$ (see, for example \cite{bm} Proposition 2.14).  If there is no place to confusion,  will often write $\nu$ instead of $\nu_f$ for the ramification function.  \\

Recall that $N_nf$ denote the number of fixed points 
of $f^n$. We will prove the following:

\begin{teo} Let $f: S^2 \to S^2$ be a Thurston map with non hyperbolic orbifold and degree $d$, $|d|>1$. Then either the growth rate inequality 
$\limsup \frac{1}{n} \log N_nf\geq \log |d|$ holds for $f$ or 
$f$ has exactly two critical points which are fixed and totally invariant.
 
\end{teo}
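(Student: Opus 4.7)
The plan is to proceed by case analysis on the signature of $\Of$, which is one of $(\infty,\infty)$, $(2,2,\infty)$, $(2,4,4)$, $(2,3,6)$, $(3,3,3)$, $(2,2,2,2)$. The first two (punctured signatures) I will handle by restricting $f$ to a self-covering of an annulus and invoking the authors' previously developed annulus theory; the last four (Euclidean signatures) I will handle by lifting $f$ to a self-covering of the torus and applying elementary algebraic topology together with the Lefschetz fixed point theorem.

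For the four Euclidean signatures, the ramification identity $\deg_p f \cdot \nu_f(p) = \nu_f(f(p))$ together with the signature data lets one build a finite branched cover $\pi:\T^2 \to S^2$ ramified exactly over $P_f$ with the prescribed local degrees, and then lift $f$ to a degree-$d$ covering map $\tilde f:\T^2\to\T^2$ satisfying $\pi\circ\tilde f = f\circ\pi$. Such a $\tilde f$ is homotopic to the linear endomorphism induced by an integer matrix $A$ with $|\det A|=|d|>1$. Fixed points of $\tilde f^n$ then count as $|\det(A^n - I)|$ on the universal cover, and $|\det(A^n-I)| = |(1-\lambda_1^n)(1-\lambda_2^n)|$ where $\lambda_1\lambda_2 = \pm d$; since $|d|>1$, a short case analysis on the eigenvalues shows $\limsup \tfrac{1}{n}\log |\det(A^n-I)|\geq \log|d|$. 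Because every fixed point of $\tilde f^n$ projects to a fixed point of $f^n$ under $\pi$, and $\pi$ has fibers of uniformly bounded size, $N_n f \geq N_n\tilde f/\deg\pi$ and the growth rate inequality descends to $f$.

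For the signature $(2,2,\infty)$, applying the ramification identity at any preimage of the weight-$\infty$ point $p_3$ forces $p_3$ to be totally invariant and fixed; the two weight-$2$ points are permuted, and after replacing $f$ by $f^2$ if necessary they are both fixed. Then $f$ restricts to a degree-$d$ self-covering of the annulus $S^2\setminus\{p_1,p_3\}$, to which the authors' annulus results apply, yielding the growth rate. For $(\infty,\infty)$, the same identity forces both weight-$\infty$ points to be totally invariant, and a local-degree count shows $S_f = P_f = \{p_1,p_2\}$. Either both are fixed by $f$, in which case the exceptional alternative of the theorem holds, or $f$ swaps them, in which case $f^2$ restricts to a degree-$d^2$ self-covering of the annulus $S^2\setminus\{p_1,p_2\}$; the annulus theory applied to $f^2$ then forces $\limsup \tfrac{1}{2n}\log N_{2n}f \geq \log|d|^2/2 = \log|d|$, which gives the required inequality for $f$.

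The main obstacle is the annular case: one has to verify that the hypotheses of the previously developed annulus theory really apply to the restrictions produced here (in particular the behavior at the two ends), and check that the exceptional case allowed by the annulus theory collapses onto the single "two fixed totally invariant critical points" alternative of the theorem, rather than producing spurious exceptions. The torus case is more mechanical, but also requires a brief argument to rule out the degenerate eigenvalue configurations in which $A^n - I$ fails to be invertible too often or the determinant grows strictly slower than $|d|^n$; here the hypothesis $|d|>1$ together with the integrality of $A$ is what makes the estimate go through.
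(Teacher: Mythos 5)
Your overall decomposition (annulus for the punctured signatures, torus lift for the Euclidean ones) matches the paper's, but there are three concrete gaps where the sketched arguments would fail or where the paper needs a genuinely different idea.

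\textbf{The $(\infty,\infty)$ swap case.} You propose to pass to $f^2$ when $f$ interchanges the two totally invariant critical points. But $f^2$ then \emph{fixes} both ends of the annulus $S^2\setminus\{p_1,p_2\}$, and the authors' annulus theory does \emph{not} give the growth rate for a general continuous degree-$d^2$ covering fixing both ends; that is exactly the situation of the counterexample $(r,\theta)\mapsto(2r,2\theta)$, and it is why the theorem has an exceptional alternative at all. The information you need is precisely that $f$ \emph{exchanges} the ends, and the paper uses Theorem~\ref{sarkoski} (maps of the annulus that interchange the ends always have the rate) directly on $f$ rather than on $f^2$. Passing to $f^2$ throws away the very hypothesis that saves you.

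\textbf{The $(2,2,\infty)$ case.} You claim $f$ restricts to a self-covering of the annulus $S^2\setminus\{p_1,p_3\}$. It does not: the ramification identity forces the weight-$2$ points $p_1,p_2$ to be regular, so each has $|d|>1$ distinct preimages and neither is totally invariant. Consequently $f^{-1}(S^2\setminus\{p_1,p_3\})\subsetneq S^2\setminus\{p_1,p_3\}$, and there is no restriction. The paper instead \emph{lifts}: it takes the two-fold branched covering $\pi:A\to\R^2$ of the plane by the open annulus, branched over $p_1,p_2$, checks via the lifting criterion (a fundamental-group computation on $H$ and $G$) that $f$ lifts to $F:A\to A$, and then shows that $F$ or $T\circ F$ (for a deck transformation $T$) interchanges the ends so Theorem~\ref{sarkoski} applies. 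This lift--and--extend step is essential and cannot be replaced by a restriction.

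\textbf{The Euclidean signatures and eigenvalue $1$.} You acknowledge that $\det(A^n-I)$ may degenerate but assert that ``the hypothesis $|d|>1$ together with the integrality of $A$ is what makes the estimate go through.'' That is not true: $A=\mathrm{diag}(1,d)$ is an integer matrix of determinant $d$ with eigenvalue $1$, and $\det(A^n-I)=0$ for all $n$, so the Lefschetz/Nielsen count is identically zero and yields no information. The paper handles this with Lemma~\ref{vap1}: because $\pi:\T^2\to S^2$ is the quotient by a nontrivial rotation $\varphi$, if $F_*$ has eigenvalue $1$ then $\varphi_*F_*$ does not, and $\varphi F$ is \emph{also} a lift of $f$ since $\pi\varphi=\pi$. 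One then applies the torus growth-rate theorem to $\varphi F$ instead. This deck-transformation replacement is the key idea in the torus cases, and your sketch is missing it.

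Finally, the two steps you implicitly treated as routine in the Euclidean cases --- that $f$ lifts across $\pi$ at all (which needs the explicit verification $f_*\pi_*\pi_1(\tilde Y)\subset\pi_*\pi_1(\tilde X)$, worked out signature by signature through the homology of the covers of wedges of circles), and that the lift on the punctured torus extends continuously over the punctures --- are exactly where most of the paper's technical work lives; they are not automatic from the ramification identity alone.
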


It follows that a Thurston map with non hyperbolic orbifold either satisfies the growth rate inequality or  the signature of $f$ is $(\infty, \infty)$ and
$P_f=S_f=\{p,q\}\subset N_1f$.\\

From now on, whenever $f:S^2\to S^2$ satisfies the growth rate inequality, we will say that $f$ ``has the rate''.\\

\section{Further definitions, notations and preliminaries}

If $f:X\to Y$, $x\in X, y\in Y$, the number $\delta_{xy}$ is $1$ if $f(x)=y$ and $0$ otherwise. We  write $f:(X,x)\to (Y,y)$ meaning that $x\in X, y\in Y$ and $\delta_{xy}=1$.\\

If $f:(X,x_0)\to (Y,y_0)$ is continuous, we write $f_*:\pi_1(X,x_0)\to \pi_1 (Y,y_0)$ and $f_{\#}:H_1(X,\Z)\to H_1 (Y,\Z)$ for the 
corresponding actions in fundamental 
groups and first homology groups respectively. We write $[\gamma]\in H_1(X,\Z)$ for $\gamma \in \pi_1(X,x_0)$.  So, $[f_*(\gamma)]=f_{\#}([\gamma])$. \\

We adopt the convention that maps between spaces are always assumed to be continuous unless otherwise stated.\\

We will often come across the topological space $S^2\backslash \{x_1, \ldots,x_n\}$.  We refer to the points $x_1, \ldots,x_n$ as the ``punctures''. In this situation, 
we consider pairwise disjoint closed topological disks  $D_i\subset S^2$ such that $x_i\in D_i$, $i=1, \ldots, n$.  We denote $\sigma_{x_i}$ the boundary of $D_i$ with
the positive orientation, $i=1, \ldots, n$.\\

We denote by $\left<g_{\alpha}, \alpha \in \Lambda\right>$ the free group in generators $g_{\alpha}, \alpha \in \Lambda$.  So, in particular taking 
$X=S^2\backslash \{x_1, \ldots,x_n\}$, we have that $\pi_1(X,x_0)=\left<\gamma_1,\ldots, \gamma_{n-1}\right>$, where each $\gamma_i$ is the homotopy class of a loop based at $x_0$ 
that is freely homotopic
in $X$ to $\sigma_{x_i}$, $i=1,\ldots,n-1 $. We also point out that the homology class $[\sigma_{x_n}]\in H_1(X,\Z)= -([\gamma_1] + \ldots+ [\gamma_{n-1}])$. We make the identification 
$H_1(X,\Z)\simeq \Z[\gamma_1]\oplus \ldots \oplus \Z[\gamma_{n-1}]$, and write
$[\gamma]=k_1[\gamma_1]+ \ldots+k_{n-1}[\gamma_{n-1}] \in H_1(X,\Z),  k_i \in \Z, i=1,\ldots,n-1$, for $\gamma\in \pi_1(X,x_0) $.\\

If $\pi: (\tilde X,\tilde x_0) \to (X,x_0)$ is a covering space and $f:Y\to X$ is a map, a lift of $f$ is a map $\tilde f:Y\to \tilde X$ such that $\pi\tilde f=f$.  In the case
that such a lift exists, we say that the map $f$ lifts to $\tilde X$. We recall here the lifting criterion (Proposition 1.33, page 61 in \cite{hatcher}), which we will use repeatedly:\\ 

\begin{prop}\label{lift} Let  $\pi: (\tilde X,\tilde x_0) \to (X,x_0)$ be a covering space and $f:(Y,y_0)\to (X,x_0)$ a  map with $Y$ path-connected and locally
path-connected.  Then a lift $f:(Y,y_0)\to (\tilde X,\tilde x_0)$ of $f$ exists iff 
$f_*(\pi_1 (Y, y_0))\subset \pi_* (\pi_1 (\tilde X,\tilde x_0))$.\\
\end{prop}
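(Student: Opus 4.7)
The plan is to prove the two implications separately. For necessity I simply appeal to functoriality: if $\tilde f:(Y,y_0)\to(\tilde X,\tilde x_0)$ is a lift, then $\pi\tilde f=f$ yields $\pi_*\tilde f_*=f_*$ at the level of fundamental groups, so $f_*(\pi_1(Y,y_0))=\pi_*(\tilde f_*(\pi_1(Y,y_0)))\subset \pi_*(\pi_1(\tilde X,\tilde x_0))$. This direction is essentially free.

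For sufficiency I will construct $\tilde f$ pointwise via path lifting. Given $y\in Y$, use path-connectedness to choose a path $\gamma:[0,1]\to Y$ from $y_0$ to $y$. Then $f\circ\gamma$ is a path in $X$ starting at $x_0$, and the standard path-lifting property for covering maps produces a unique lift $\widetilde{f\gamma}:[0,1]\to \tilde X$ with $\widetilde{f\gamma}(0)=\tilde x_0$. I define $\tilde f(y):=\widetilde{f\gamma}(1)$. The obvious issue is whether this depends on the choice of $\gamma$; this is precisely where the algebraic hypothesis enters, and it is the main obstacle. If $\gamma'$ is another such path, then $\eta:=\gamma\cdot\bar\gamma'$ is a loop at $y_0$, so $[f\circ\eta]=f_*([\eta])$ lies in $f_*(\pi_1(Y,y_0))\subset \pi_*(\pi_1(\tilde X,\tilde x_0))$. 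This means that the lift of $f\circ\eta$ starting at $\tilde x_0$ is itself a loop in $\tilde X$; by unique path lifting the concatenation of the lift of $f\gamma$ with the reversed lift of $f\gamma'$ closes up, which forces the two lifts to share a common endpoint. Hence $\tilde f(y)$ is well defined and, by construction, $\pi\tilde f=f$ and $\tilde f(y_0)=\tilde x_0$.

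It remains to verify continuity, and this is where local path-connectedness of $Y$ enters. Fix $y\in Y$ and an evenly covered neighborhood $U\subset X$ of $f(y)$, and let $\tilde U$ be the sheet of $\pi^{-1}(U)$ containing $\tilde f(y)$, so that $\pi|_{\tilde U}:\tilde U\to U$ is a homeomorphism with inverse $s$. The preimage $f^{-1}(U)$ is open around $y$, so local path-connectedness yields a path-connected open neighborhood $V\subset f^{-1}(U)$ of $y$. For any $y'\in V$, choose a path from $y$ to $y'$ inside $V$ and concatenate it with a fixed reference path from $y_0$ to $y$; lifting the image under $f$ of this concatenation and noting that the portion inside $U$ lifts uniquely via $s\circ f$ shows $\tilde f(y')=s(f(y'))$ for all $y'\in V$. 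Hence $\tilde f|_V=s\circ f|_V$ is continuous at $y$, and since $y$ was arbitrary the construction yields the desired lift. The one genuinely nontrivial step is the well-definedness, which isolates the role of the subgroup condition $f_*(\pi_1(Y,y_0))\subset \pi_*(\pi_1(\tilde X,\tilde x_0))$; everything else reduces to path lifting and the local structure of covering maps.
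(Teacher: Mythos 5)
Your proof is correct, and it is precisely the standard argument (necessity by functoriality of $\pi_1$; sufficiency by defining $\tilde f$ through path lifting, using the subgroup hypothesis for well-definedness, and local path-connectedness plus evenly covered neighborhoods for continuity). The paper itself offers no proof of this proposition: it is explicitly recalled as Proposition~1.33 of Hatcher and cited, not re-proved, so there is nothing to compare your argument against other than the textbook source it reproduces faithfully.
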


\section{$(\bm{\infty},\bm{\infty})$}
In this case, we have $P_f=S_f=\{p,q\}$ and  the set $\{p,q\}$ is totally invariant. Then, $f$ acts by restriction on the open
annulus $S^2 \backslash \{p,q\}$. As was pointed out in the Introduction, the growth rate is not necessarily satisfied in this class. We refer the reader to \cite{iprx1}, 
\cite{iprx2},\cite{iprx3} for a thorough study of the growth rate inequality problem for annulus maps.  For example, we prove that if $f$ is a self-map of the open annulus $A$ 
such that the ends of $A$ are both repelling or both attracting and $|\deg(f)|>1$, then $f$ has the rate.  In particular, if $f$ is $C^1$ and $|\deg(f)|>1$, then $f$ has the rate (both
ends are attracting in the $C^1$ setting).\\

Relative to the present work, we make the following remark here that will be used later: if a map
in this class does not  satisfy the growth rate, then necesarilly $p$ and $q$ are fixed (and not a period 2 critical cycle). 
This is Remark 4. (2) in \cite{iprx3}:

\begin{teo}\label{sarkoski} Let $F$ be a map of the open annulus $A$ that interchanges the ends of $A$. If $|\deg(F)|>1$, then $F$ has the rate.

 \end{teo}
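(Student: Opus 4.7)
The natural strategy is to pass from $F$ to its second iterate $G := F^2$, which now preserves each end of $A$ and has degree $\deg(G) = \deg(F)^2$ of absolute value greater than $1$. Since the fixed points of $G^n$ coincide with those of $F^{2n}$, one has $N_n G = N_{2n} F$, and therefore
$$\limsup_{n\to\infty} \frac{1}{n}\log N_n F \ \geq\ \limsup_{n\to\infty} \frac{1}{2n}\log N_{2n} F \ =\ \tfrac{1}{2}\limsup_{n\to\infty} \frac{1}{n}\log N_n G.$$
Hence it suffices to prove the rate for $G$, in the form $\limsup \frac{1}{n}\log N_n G \geq 2\log|\deg(F)|$.

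To establish the rate for $G$, I would invoke the result mentioned at the start of the previous section, proved in \cite{iprx1}--\cite{iprx3}: any self-map of $A$ of degree $|d|>1$ whose two ends are both attracting or both repelling has the rate. What makes this theorem applicable here is an explicit symmetry: since $F$ swaps the two ends $e_1, e_2$ and since $F \circ G = G \circ F$, the restriction of $F$ to a small annular neighborhood of $e_1$ provides a topological semi-conjugacy between the germ of $G$ at $e_1$ and the germ of $G$ at $e_2$. Any nested sequence of annular neighborhoods witnessing that $e_1$ is attracting (respectively repelling) for $G$ is therefore pushed forward by $F$ to a nested sequence witnessing the same property at $e_2$, so the two ends of $A$ must share the same dynamical character under $G$, placing $G$ within the hypotheses of the cited result.

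The main obstacle is precisely this last symmetry step: one has to verify that mixed configurations are genuinely excluded. Because $F$ is only assumed continuous, the notions of ``attracting'' and ``repelling'' used in \cite{iprx1}--\cite{iprx3} must be interpreted topologically (in terms of nesting of annular neighborhoods and preimages under $G$), and one must check carefully that $F$ maps witnesses for one end onto witnesses for the other. A related worry is that the ends of $G$ might fail to be attracting or repelling in the strict sense of those references, in which case one would need to appeal to whichever complementary theorem in \cite{iprx1}--\cite{iprx3} handles the leftover configurations. A fall-back approach, should this direct reduction fail, is to extend $F$ continuously to a map $\bar F : S^2 \to S^2$ by adding $e_1, e_2$ as a period-$2$ cycle, apply the Lefschetz fixed point theorem to $\bar F^n$ (whose Lefschetz number is $1+\deg(F)^n$, of exponential modulus), and control the indices at $\{e_1,e_2\}$ to force exponentially many fixed points of $F^n$ inside $A$.
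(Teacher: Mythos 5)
The paper's own proof is a one-line sketch: ``Lefschetz fixed point theorem combined with Nielsen theory in the universal covering space,'' with details deferred to \cite{iprx3}, Remark 4.(2). Your proposal takes a genuinely different route (reduction to $G=F^2$ and the attracting/repelling result), but it has a gap that you correctly flag and do not close.

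The reduction $G=F^2$ and the inequality chain relating the rate of $F$ to that of $G$ are fine, and the symmetry argument -- $F$ commutes with $G$ and swaps the ends, so the two ends of $G$ must have the same dynamical character -- is also sound. The problem is that ``same character'' has more than two possibilities. The theorem you want to cite from \cite{iprx1}--\cite{iprx3} requires both ends to be \emph{attracting} or both \emph{repelling}, and nothing in the hypotheses forces either. A continuous degree-$d$ self-map of the annulus can have an end that is neither attracting nor repelling in the topological sense used there (for example, an end with mixed or parabolic-like behavior, or an end carrying recurrent dynamics of its own). Your symmetry argument only rules out the \emph{asymmetric} mixed case (one end attracting, the other repelling); it does not rule out the symmetric ``neither'' case. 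You acknowledge this at the end of the second paragraph, but leave it as an open worry, which means the main argument is not complete. This is precisely why the paper does not deduce Theorem 2 from the attracting/repelling theorem: that result does not cover the general swapping case.

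Your fall-back is the right track but stops one idea short of a proof. Compactifying $F$ to $\bar F:S^2\to S^2$ with $\{e_1,e_2\}$ a period-2 cycle and applying Lefschetz to $\bar F^n$ does give $\sum \mathrm{ind} = 1+\deg(F)^n$, and for odd $n$ the compactifying points contribute nothing since they are not fixed. However, ``control the indices to force exponentially many fixed points'' is where the argument must do real work: for a merely continuous map, the index at a single isolated fixed point can be arbitrarily large in absolute value, so a large Lefschetz number does not by itself force many fixed points. The missing ingredient is exactly what the paper names -- Nielsen theory in the universal cover. One passes to lifts of $F^n$ to the universal cover of $A$, uses the degree to count Nielsen (or Reidemeister) classes (on the order of $|d^n-1|$ of them), and shows enough of them are essential; each essential class then produces a genuine fixed point regardless of individual indices. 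Without some version of this class-counting step, the Lefschetz computation alone does not close the argument.
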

 
The proof of this theorem can be quickly summed up for the experienced reader:  Lefschetz fixed point theorem combined with Nielsen theory
in the universal covering space. For details of the proof see the reference above.\\

\section{$(\bm{2},\bm{2},\bm{\infty})$}

Let $f:S^2\to S^2$ be a Thurston map in the class $(2,2,\infty)$. The equation $\deg_xf . \nu_f(x) = \nu_f(f(x))$ for all $x\in S^2$ implies that $f$ is a Thurston polynomial, 
that is, there exists a totally invariant fixed point (the one 
point in $P_f$ with weight $\infty$). Moreover,  the points $p$ and $q$ with weight $2$ are regular points. Deleting the totally invariant critical point, we obtain a branched 
covering $f:\C\to \C$, with postcritical set $\{p,q\}$ disjoint from the critical points.  \\

Let $\Gamma$ be the subgroup of  automorphisms of the complex plane $\C$ generated by $z\mapsto z+1$ and $z\mapsto -z$.  The quotient space $\C/\Gamma$ is topologically a plane, 
and the quotient projection $\pi: \C/<z\mapsto z+1>\to \C/\Gamma$ is a two-fold branched covering of the open annulus $A=\C/<z\mapsto z+1>$ onto the plane with two branched points
$b_1$ and $b_2$. 
 We may 
as well assume that $\pi(b_1)= q$ and $\pi(b_2)=p$. Then, $\pi: \tilde X = A\backslash \{b_1, b_2\} \to \C\backslash \{p,q\} = X$ is a covering map.  Let  
$Y=X\backslash f^{-1}(\{p,q\})$ and $\tilde Y=A\backslash \pi^{-1}(f^{-1}(\{p,q\}))$. \\

\begin{lemma} The map $f\pi|_{\tilde Y}:\tilde Y\to X$ lifts to $\tilde X$.
 
\end{lemma}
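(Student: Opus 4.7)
The plan is to apply the lifting criterion (Proposition \ref{lift}) to $f\pi|_{\tilde Y}: \tilde Y \to X$. Since $\pi:\tilde X\to X$ is an unbranched double cover (the restriction of the branched cover $A\to\C/\Gamma$ to the complement of the ramification set $\{b_1,b_2\}$), the subgroup $\pi_*\pi_1(\tilde X)$ is the kernel of a surjection $\phi:\pi_1(X)\to\Z/2\Z$. Since the target is abelian, $\phi$ factors through $H_1(X,\Z)=\Z\sigma_p\oplus\Z\sigma_q$, and the monodromy of $\pi$ around the branch points $p$ and $q$ forces $\phi(\sigma_p)=\phi(\sigma_q)=1$. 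Using the sphere relation $\sigma_\infty=-\sigma_p-\sigma_q$ one gets $\phi(\sigma_\infty)=0$: a loop in $X$ lifts to $\tilde X$ exactly when its $p$- and $q$-winding numbers have even sum.

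It then suffices to check that $\phi$ annihilates $(f\pi)_\#[\tilde\gamma]$ for every generator of $H_1(\tilde Y,\Z)$, namely for the small loops $\tilde\sigma_w$ around each puncture $w$ of $\tilde Y$ in $S^2$. These punctures are $b_1,b_2$, the two ends of the annulus $A$, and the $\pi$-preimages of the finitely many points of $f^{-1}\{p,q\}\setminus\{p,q\}$. At each such $w$ one has $\pi_\#(\tilde\sigma_w)=e_\pi(w)\sigma_{\pi(w)}$ and $f_\#(\sigma_z)=\deg_z(f)\sigma_{f(z)}$, so it remains to evaluate $\phi\bigl(e_\pi(w)\deg_{\pi(w)}(f)\,\sigma_{f(\pi(w))}\bigr)$ in each case.

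The key input is the orbifold equation $\deg_z(f)\,\nu_f(z)=\nu_f(f(z))$, which together with $\nu_f(p)=\nu_f(q)=2$ gives: $f(\{p,q\})\subset\{p,q\}$ and $\deg_p(f)=\deg_q(f)=1$; $\deg_z(f)=2$ for every $z\in f^{-1}\{p,q\}\setminus\{p,q\}$ (such $z$ lie outside $P_f$, hence $\nu_f(z)=1$); and $\deg_\infty(f)=d$. Together with $e_\pi(b_i)=2$ and $e_\pi=1$ elsewhere, one obtains an even contribution in every case: at $b_1$ or $b_2$, $(f\pi)_\#(\tilde\sigma_w)=2\sigma_{f(\pi(w))}$; at a $\pi$-preimage of some $z\in f^{-1}\{p,q\}\setminus\{p,q\}$, $(f\pi)_\#(\tilde\sigma_w)=2\sigma_{f(z)}$; at an end of $A$, $(f\pi)_\#(\tilde\sigma_w)=d\sigma_\infty=-d\sigma_p-d\sigma_q$, of $\phi$-value $-2d\equiv 0$. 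Thus $\phi\circ(f\pi)_\#=0$ on $H_1(\tilde Y,\Z)$, so $(f\pi)_*\pi_1(\tilde Y)\subseteq\ker\phi=\pi_*\pi_1(\tilde X)$, and Proposition \ref{lift} produces the desired lift.

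The main subtlety is the correct identification of $\phi$ and, relatedly, the handling of the two ends of $A$: both are punctures of $\tilde Y$ interchanged by the deck transformation of $\pi$, and both project to the single $\infty$-end of $\C/\Gamma$. The argument must rely on the orbifold data at $\infty$ (giving $\deg_\infty f=d$) together with the sphere relation to force even parity at those ends, rather than on the ramification of $\pi$ (which is trivial there).
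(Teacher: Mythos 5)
Your proof is correct and implements the same strategy as the paper: apply the lifting criterion of Proposition~\ref{lift}, observe that $\pi_*\pi_1(\tilde X)$ is detected by the mod-$2$ winding homomorphism $\phi$ on $H_1(X,\Z)$, and then use the orbifold relation $\deg_x f\cdot\nu(x)=\nu(f(x))$ to show that $(f\pi)_\#$ lands in $\ker\phi$ (the paper records the same parity check via the explicit index-two subgroups $G$ and $H$). The only bookkeeping difference is that you generate $H_1(\tilde Y,\Z)$ by puncture loops in $S^2$, so you must separately handle the two ends of $A$ using the total invariance of $\infty$, whereas the paper works inside $\C$ where $\infty$ is already discarded and the end contribution is absorbed into the relation $x+y=m+n$.
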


\begin{proof} We want to apply Proposition \ref{lift} to the map $f\pi|_{\tilde Y}:(\tilde Y, \tilde y_0)\to (X,x_0)$, 
and the covering space $\pi: (\tilde X,\tilde x_0) \to (X,x_0)$.  We need then to 
ensure that 
$f_*{\pi|_{\tilde Y}}_* (\pi_1 (\tilde Y,\tilde y_0))\subset \pi_* (\pi_1 (\tilde X,\tilde x_0)) =G$.  Note that $\pi_1(X,x_0)=\left<a,b\right>$, where $a$ is the homotopy class of a 
loop based at $x_0$ freely homotopic to $\sigma_p$, and $b$ is the homotopy class of a loop based at $x_0$
freely homotopic to $\sigma_q$. Moreover,  $G= \left<a^2, b^2, ab\right>$.  
We make the identification 
$H_1(X,\Z)\simeq \Z [a]\oplus \Z[b]$, and write $[\gamma]=m[a] + n [b] \in H_1(X,\Z), m,n \in \Z$ for $\gamma\in \pi_1(X,x_0) $. Note also 
that
$\gamma \in G$ if and only if $[\gamma]$ verifies $m+n =0 \mod 2$.\\

Take $y_0=\pi(\tilde y_0)$ and let $\left<\alpha, \beta,\gamma_1, \ldots, \gamma_n\right>$ generate $\pi_1(Y,y_0)$,  where the generators correspond to the punctures at $p,q$ and
their $n$ critical preimages 
outside $\{p,q\}$. That is, each homotopy class of a
loop in the generating set is freely homotopic to $\sigma_x$ for some  $x\in f^{-1}(\{p,q\})=\{p,q,c_i, i=1,\ldots,n\}$. \\

Then, 
$$H={\pi|_{\tilde Y}}_*(\pi_1 (\tilde Y,\tilde y_0))=\left < \alpha^2, \beta^2,\alpha\beta,\gamma_i,\alpha \gamma_i\alpha^{-1}, i=1,\ldots, n\right>.$$\\
We  write
$[\gamma]=m[\alpha]+n[\beta]+ k_1[\gamma_1]+ \ldots+k_n[\gamma_n] \in H_1(Y,\Z), m,n, k_i \in \Z, i=1,\ldots,n$ for $\gamma\in \pi_1(Y,y_0) $. Note  
that
$\gamma \in H$ if and only if $[\gamma]$ verifies $m+n =0 \mod 2$.\\

We have illustrated the covering spaces $\tilde Y$ and $\tilde X$ modulo homotopy equivalence in Figure \ref{228} (this picture is explained in detail at the end of the next section).\\

\begin{figure}[h]
 {\includegraphics[scale=0.5]{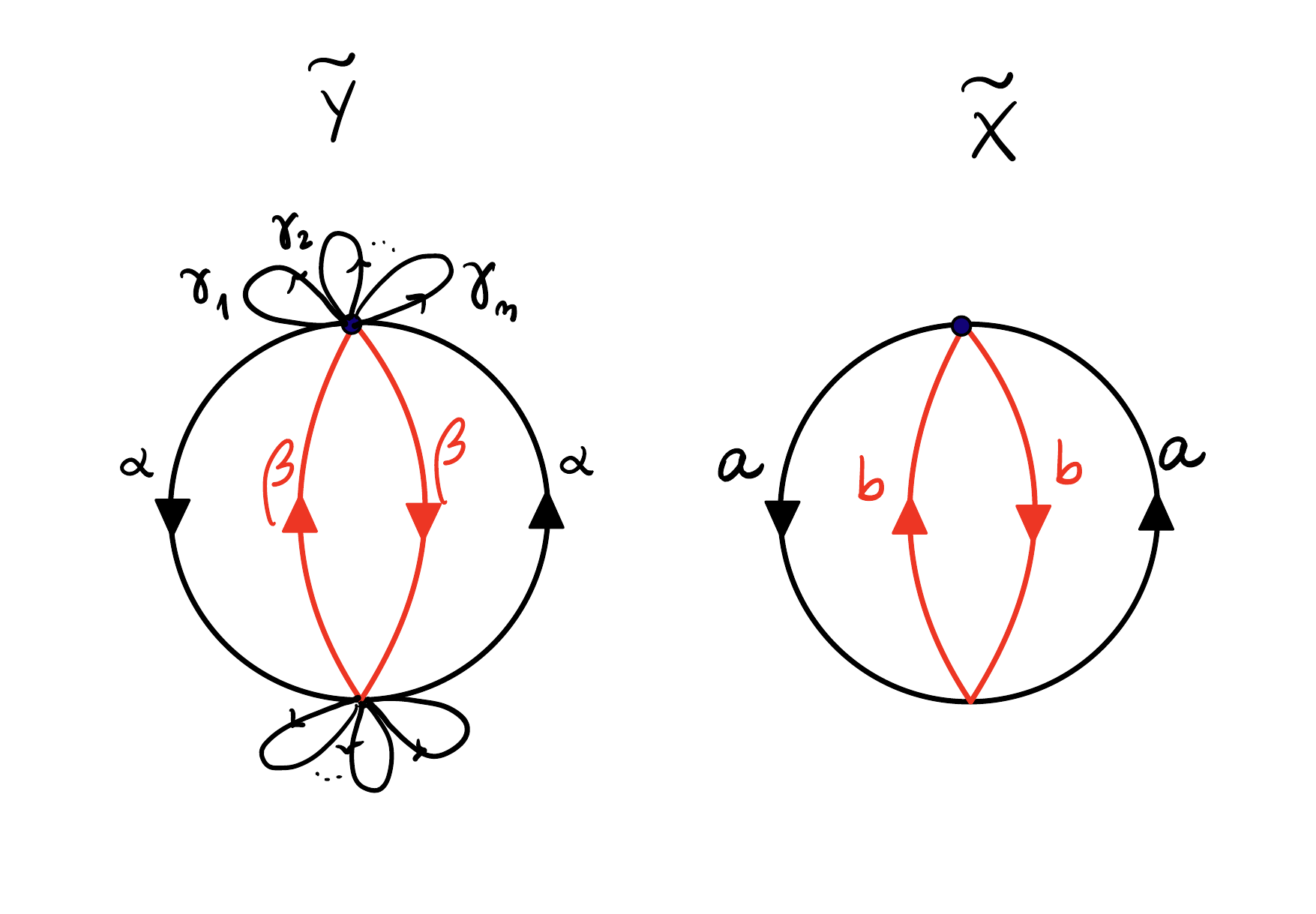}}
\caption{\label{228} $H=\left < \alpha^2, \beta^2,\alpha\beta,\gamma_i,\alpha \gamma_i\alpha^{-1}, i=1,\ldots, n\right>$; $G= \left<a^2, b^2, ab\right>$}
\end{figure}

As both $p$ and $q$ are regular 
points and the set $\{p,q\}$ is invariant, 
we have that $f_{\#}([\alpha])$ and $f_{\#}([\beta])$ are  either $[a]$ or $[b]$. Note also that the equation $\deg_xf . \nu(x) = \nu(f(x))$ for all $x\in S^2$ implies that 
 $\deg_{c_i}f = 2$ for all $i=1,\ldots,n$, so $f_{\#}([\gamma_i])$ is  either $2[a]$ or $2[b]$.  Then, 
 $$f_{\#}(m[\alpha]+n[\beta]+ k_1[\gamma_1]+ \ldots+k_n[\gamma_n])=(x+\sum_{j:f(c_j)=a} 2k_j)[a] + (y+\sum_{j:f(c_j)=b} 2k_j)[b],$$\noindent
 where $x+y=m+n$.  It follows that $x+\sum_{j:f(c_j)=a} 2k_j+ y+\sum_{j:f(c_j)=b} 2k_j= m+n \mod 2$.  So, $f_{*}(\gamma)\in G$ if $\gamma \in H$.\\

\end{proof}

We let $\tilde f:\tilde Y\to \tilde X$ be the lift given by the previous lemma.  That is, $\pi\tilde f=f\pi|_{\tilde Y}$.

\begin{lemma} The lift $\tilde f:\tilde Y\to \tilde X$ extends continuously to  $F: A \to A$ and $\deg(f) = \deg (F)$.
 
\end{lemma}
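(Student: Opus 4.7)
The plan is to extend $\tilde f$ across each puncture by hand, verify continuity using the properness of $\pi$, and then derive the degree equality from the covering relation $\pi F = f\pi$ after passing to sphere compactifications.

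Define $F$ at each puncture $\tilde c$ of $\tilde Y$ as follows. Since $\tilde c \in \pi^{-1}(f^{-1}(\{p,q\}))$, we have $\pi(\tilde c) \in f^{-1}(\{p,q\})$ and therefore $f\pi(\tilde c) \in \{p,q\}$. As $p$ and $q$ are the branch values of the two-fold branched cover $\pi : A \to \C$, each has a unique $\pi$-preimage in $A$, namely $b_2$ and $b_1$ respectively. I declare $F(\tilde c)$ to be the unique $b_j \in \{b_1, b_2\}$ with $\pi(b_j) = f\pi(\tilde c)$, and take $F = \tilde f$ on $\tilde Y$. For continuity at $\tilde c$, let $z_n \to \tilde c$ in $\tilde Y$. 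Continuity of $f$ and $\pi$ gives $\pi F(z_n) = f\pi(z_n) \to \pi(b_j)$. The map $\pi : A \to \C$ is proper (it extends to a branched covering $\hat{A} \to S^2$, where $\hat{A} = A \cup \{0,\infty\}$ is the sphere compactification with both ends of $A$ mapped to $\infty$), and $\pi^{-1}(\pi(b_j)) = \{b_j\}$ since $\pi(b_j)$ is a branch value. Properness forces every accumulation point of $F(z_n)$ to lie in $\{b_j\}$, so $F(z_n) \to b_j = F(\tilde c)$, and $F : A \to A$ is continuous.

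For the degree identity, I extend $F$ once more to $\hat{A} \to \hat{A}$. For $z_n$ converging to an end of $A$, say $z_n \to 0$ in $\hat{A}$, we have $\pi(z_n) \to \infty$ and $f(\infty) = \infty$ because $\infty$ is the totally invariant fixed point of $f$; hence $\pi F(z_n) \to \infty$ and $F(z_n)$ accumulates in $\pi^{-1}(\infty) = \{0, \infty\}$. Since cluster sets of continuous maps are compact and connected while $\{0,\infty\}$ is disconnected, the cluster set reduces to a singleton, and $F$ extends continuously at each end with value in $\{0,\infty\}$. The relation $\pi F = f\pi$ now holds on all of $\hat{A}$, so taking degrees and using $\deg(\pi) = 2$ yields $2\deg(F) = 2\deg(f)$, i.e.\ $\deg(F) = d$.

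The main subtlety is ruling out oscillatory behaviour of $F(z_n)$ at the ends of $A$, where $\pi^{-1}(\infty) = \{0,\infty\}$ is not a singleton; the cluster-set connectedness argument is exactly what is needed here. At internal punctures $\pi^{-1}(\pi(b_j))$ is already a singleton, so continuity there is immediate. Once the extension is in place, the degree identity follows formally from the multiplicativity of degree in the compactified covering relation.
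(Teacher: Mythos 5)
Your extension of $\tilde f$ across the punctures of $\tilde Y$ is exactly the paper's construction (define $F = \pi^{-1}f\pi$ at the missing points, which makes sense because $f\pi$ sends them into the branch values $\{p,q\}$, each of which has a unique $\pi$-preimage), and your properness/cluster-set justification of continuity is a correct, if more detailed, version of what the paper just calls ``continuous by construction.'' Where you genuinely diverge is the degree computation. The paper stays on the open annulus and does a direct first-homology calculation: take a small loop $\gamma$ around the totally invariant point $\infty$, lift it to a loop $\tilde\gamma$ generating $H_1(A,\Z)$ in an end of $A$ where $\pi$ is a homeomorphism, and use that $f$ is locally $z\mapsto z^d$ near $\infty$ to read off $[F(\tilde\gamma)] = d[\tilde\gamma]$. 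You instead compactify $A$ to the sphere $\hat A$, push the cluster-set argument once more to extend $F$ over the two ends, and then invoke multiplicativity of degree for maps of closed orientable surfaces in the relation $\hat\pi\hat F = f\hat\pi$. Interestingly, that is precisely the argument the paper itself uses in Lemma~\ref{ext} for the torus cases, where the covering space is already compact; you manufacture compactness rather than dodge its absence. Both routes are valid. One small point worth a sentence of care in your version: the degree of the compactified map $\hat F$ on $H_2(S^2)$ and the degree of $F$ on $H_1(A)$ agree only up to a sign (they differ by $-1$ when $F$ exchanges the ends of $A$), so if one insists on the $H_1$ convention a brief orientation check is needed; this is immaterial for the theorem, which only ever uses $|\deg(F)| = |d| > 1$.
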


\begin{proof} The lift $\tilde f$ is defined on  $\tilde Y=A\backslash \pi^{-1}(f^{-1}(\{p,q\}))$, so it is enough to define $\tilde f$ on the set $ \pi^{-1}(f^{-1}(\{p,q\})$. 
Note now that $p$ and $q$ are the critical values of $\pi$, that is, $p$ and $q$ are the only points in $S^2$ that have only one preimage by $\pi$, namely the points $b_1$ and
$b_2 \in A$. So, if $ x\in \pi^{-1}(f^{-1}(\{p,q\})$, then $f\pi(x)$ is either $p$ or $q$ and so $f\pi(x)$ has a uniquely defined preimage by $\pi$. We can now define 
$F(x) = \pi^{-1}f\pi(x)$ if $x\notin Y$ and $F(x)= \tilde f(x)$ if $x\in Y$.  The resulting map $F$ is  continuous by construction.
To show that  $\deg(f) = \deg (F)$, let $d=\deg(f)$. Note that $f$ is locally $z\to z^d$ around $\infty$ and that $\pi$ is a homeomorphism in a neighbourhood of any of the two preimages of $\infty$.
So, we can take a simple loop $\gamma\subset S^2$ in a suitable neighbourhood of $\infty$,  take a simple loop  $\tilde \gamma\in \pi^{-1}(\gamma)$ and note that its homology class $[\tilde \gamma]$ 
generates $H_1(A,\Z)$. Then, we have  $[F(\tilde \gamma)]=d [\tilde\gamma]$, that is, $ \deg (F)=d$ .

\end{proof}

As a consequence, we obtain the following result:

\begin{teo}\label{22infty} Let $f:S^2\to S^2$ be a Thurston map with $(2,2,\infty)$ orbifold.  Then $f$ has the rate.
 
\end{teo}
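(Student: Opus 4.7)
The plan is to use the two preceding lemmas to reduce the question to annulus dynamics. We have the lift $F: A \to A$ with $\deg F = d \geq 2$ satisfying $\pi \circ F = f \circ \pi$, and the goal becomes to show that $F$ has the rate, which will then transfer to $f$.

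First I would transfer rates from $F$ to $f$. From $\pi F = f \pi$ one gets $\pi F^n = f^n \pi$, so $\pi$ sends $\fix(F^n)$ into $\fix(f^n)\cap \C$. Since $\pi$ is at most 2-to-1 (it is branched only at $b_1,b_2$), one obtains $N_n F \leq 2 N_n f$, so
$$\limsup \frac{1}{n}\log N_n F \;\leq\; \limsup \frac{1}{n}\log N_n f.$$
Hence it suffices to show that $F$ has the rate.

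Next I would split into cases according to how $F$ acts on the two ends of $A$. If $F$ interchanges the ends, Theorem \ref{sarkoski} applies directly and yields the rate for $F$. If instead $F$ preserves each end, I would argue that both ends are attracting: the totally invariant fixed point $\infty$ of $f$ has local degree $d \geq 2$, so $f$ is locally conjugate to $z \mapsto z^d$ near $\infty$ and $\infty$ is topologically attracting; and because $\pi$ is a local homeomorphism near each end of $A$ (the branching occurs only at the interior points $b_1,b_2$), the local dynamics of $F$ near each end is conjugate to that of $f$ near $\infty$. The result from \cite{iprx1}, \cite{iprx2}, \cite{iprx3} recalled in the previous section (both ends attracting or both repelling, together with $|\deg F|>1$, implies the rate) then finishes the argument.

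The main obstacle is essentially bookkeeping: verifying that the end-dichotomy exhausts all possibilities and that the appropriate annulus result is invoked in each case. In particular, for the preserved-ends case one must match the informal ``attracting'' conclusion obtained from the local model $z\mapsto z^d$ to the precise topological hypothesis on ends used in \cite{iprx3}; once this is done, the proof of Theorem \ref{22infty} is simply the combination of the two lemmas with Theorem \ref{sarkoski} and the ends result.
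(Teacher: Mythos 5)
Your reduction from $f$ to $F$ via the two preceding lemmas, the transfer of the rate across the finite cover $\pi$, and the end-interchanging case (via Theorem~\ref{sarkoski}) all match the paper's argument. However, the end-preserving case contains a genuine gap. You claim that since $\infty$ is a totally invariant fixed point of local degree $d$, the map $f$ is \emph{locally conjugate} to $z\mapsto z^d$ near $\infty$, hence $\infty$ is topologically attracting. That is false for general (merely continuous) branched coverings: the local normal form at a fixed critical point of degree $d$ is $\psi^{-1}\circ(z\mapsto z^d)\circ\phi$ with $\phi$ and $\psi$ independent local homeomorphisms --- a separate change of coordinates in domain and target, \emph{not} a conjugacy. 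There is no B\"ottcher theorem in the topological category. For example, $f(re^{i\theta})=re^{id\theta}$ is a local degree-$d$ branched covering fixing the origin that is neither attracting nor repelling there, and one can realize such a model at $\infty$ for a Thurston map with $(2,2,\infty)$ signature. Consequently the ends of $A$ need not be attracting or repelling, and the annulus theorem you invoke (both ends attracting or both repelling) may simply not apply, leaving the argument incomplete.

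The paper sidesteps this entirely with a short trick: since $\pi$ is $2$-fold, there is a nontrivial deck transformation $T$ of $A$ (extending the covering transformation of $A\setminus\{b_1,b_2\}$) that interchanges the ends and satisfies $\pi T=\pi$. Then $TF$ is also a lift of $f$, and exactly one of $F$ and $TF$ interchanges the ends; Theorem~\ref{sarkoski} applies to that one, with no hypothesis on the dynamics at the ends, and the rate transfers to $f$ just as in your first step. To fix your write-up, replace the attracting-ends argument with this relabeling of the lift by the deck transformation.
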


\begin{proof} We first show that the  map $F:A\to A$ given by the previous lemma can be chosen so that it exchanges the ends of $A$. Let 
$T: A\backslash \{b_1, b_2\} \to A\backslash \{b_1, b_2\}$ be a non-trivial covering transformation (that is, $\pi=\pi T$).  Note that if the map $F:A\to A$ 
fixes the ends of $A$, then $TF$ exchanges them.  Now,  Theorem \ref{sarkoski} implies that either $F$ or $TF$ has the rate, but this implies that $f$ has the rate as the
covering projection $\pi$
is two-fold (for every iterate $k$ the fixed points of $f^k$ are at least half of those of $F^k$).
 
\end{proof}

We point out here that  no Thurston polynomial is expanding (see for example Lemma 6.8 in \cite{bm}). So, the rate for maps with signature $(2,2,\infty)$ does not follow from the results in \cite{Li}.\\

We finish this section relating Theorem \ref{22infty} with Garc\'ia's paper \cite{alejo}. In that work the author gives sufficient conditions for degree $2$ branched covering maps of
the plane to have a fixed point. For example, if $f:\R^2\to \R^2$ is a degree $2$ branched covering and $K\subset \R^2$ a totally invarint continuum that does not separate the critical point
from its image, then $f$ has a fixed point. In the very special case that the  critical point $c$ of such a covering $f$ satisfies $c\neq f(c)$, $f^2 (c) = p$ and $p$ is a fixed point,
we can use our previous result to obtain the much stronger
conclusion that $f$ has the rate (just note that $f$ extends to $S^2$ in the $(2,2,\infty)$ class).  In this case, there are no additional dynamical hypothesis, only those imposed on
the postcritical set. \\

\section{Branched coverings from tori}

The remaining cases have the property of lifting to  certain branched coverings $\pi:T^2 \to S^2$.  The following lemmas will come in handy:

\begin{lemma}\label{vap1} Let $T: \R^2\to \R^2$ be $T(z)=e^{i\theta} z$, $\theta \neq 2k\pi,k\in \Z$, and let $p: \R^2 \to  \T^2$ be a covering projection
 such that $pT=\varphi p$, with  $\varphi:\T^2\to \T^2$.  Let $F: \T^2  \to \T^2$ be a degree $d$ map with $|d|>1$ and let  $\tilde F:\R^2 \to \R^2$ such that
 $p\tilde F = Fp$.  If $F_*$ has $1$ as an eigenvalue, then $\varphi_*F_*$
does not. 
 
\end{lemma}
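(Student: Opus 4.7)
The plan is to prove the statement as a purely algebraic assertion about the induced $2\times 2$ integer matrices $A := F_*$ and $B := \varphi_*$ on $H_1(\T^2;\Z)\cong\Z^2$, using the rigidity of $T$ as a rotation.

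First I would establish the structural properties of $B$. Identifying $H_1(\T^2;\Z)$ with the lattice $\Lambda=\ker p \subset \R^2$, the condition $pT=\varphi p$ means $T$ preserves $\Lambda$, and $B$ is precisely the restriction $T|_\Lambda$. Thus $B$ is a finite-order element of $SL_2(\Z)$ with eigenvalues $e^{\pm i\theta}$; in particular $\det B=1$, $\mathrm{tr}(B)=2\cos\theta$, and the preservation of $\Lambda$ by a nontrivial rotation forces $2\cos\theta\in\{-2,-1,0,1\}$, i.e.\ $\theta\in\{\pi,\pm 2\pi/3,\pm\pi/2,\pm\pi/3\}$.

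Next I would rephrase the conclusion as a trace identity. The hypothesis that $1$ is an eigenvalue of $A$, combined with $\det A=d$, implies that the eigenvalues of $A$ are $1$ and $d$, so $\mathrm{tr}(A)=1+d$. Using the $2\times 2$ identity $\det(M-I)=\det M-\mathrm{tr}(M)+1$ and $\det(BA)=d$, the conclusion ``$1$ is not an eigenvalue of $BA$'' is equivalent to $\mathrm{tr}(BA)\neq 1+d$.

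To compute $\mathrm{tr}(BA)$, I would diagonalize $A$ over $\Q$ in an eigenbasis $\{v_1,v_2\}$ with $Av_1=v_1$ and $Av_2=dv_2$, and write $B$ in this basis as $\begin{pmatrix}\alpha & \beta \\ \gamma & \delta\end{pmatrix}$. A direct computation gives $\mathrm{tr}(BA)=\alpha+d\delta$. Combining this with the basis-invariant identity $\alpha+\delta=2\cos\theta$, the supposition $\mathrm{tr}(BA)=1+d$ would force $\delta=(1+d-2\cos\theta)/(d-1)$. The case $\theta=\pi$ is immediate: $B=-I$ in every basis, so $\alpha=\delta=-1$, and the equation becomes $-(1+d)=1+d$, i.e.\ $d=-1$, contradicting $|d|>1$. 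For the remaining angles, $B$ admits more flexibility in the $A$-eigenbasis; the contradiction must come from the integrality of $B$ in the standard $\Z^2$ basis together with the equivariance satisfied by any lift $F$ of a Thurston map, which forces $ABA^{-1}$ to lie in the finite group of deck transformations of the branched cover $\pi:\T^2\to S^2$.

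I expect the main obstacle to be this final case analysis for $B$ of orders $3$, $4$, and $6$: the forced rational value of $\delta$ is not automatically incompatible with the entries of an integer $B$ in a rational basis, so one must exploit the extra constraint coming from the Thurston-lift structure, perhaps by invoking the Cayley-Hamilton relation $A^2-(1+d)A+dI=0$ to transfer integrality across the conjugation by $A$.
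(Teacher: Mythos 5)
Your algebraic setup is correct, and your treatment of $\theta=\pi$ is complete (there $\varphi_*=-\mathrm{id}$, so $\varphi_*F_*=-F_*$ has eigenvalues $-1,-d$, neither equal to $1$); that case is essentially the paper's argument. But for the remaining angles you stop at ``the contradiction must come from \dots the Thurston-lift structure,'' and this is a genuine gap that cannot be closed by the trace/integrality algebra you set up, because the statement without the extra structure is \emph{false}. Take $\theta=\pi/2$, $\Lambda=\Z^2$, $B=\left(\begin{smallmatrix}0&-1\\1&0\end{smallmatrix}\right)$ and $A=\left(\begin{smallmatrix}0&3\\-1&4\end{smallmatrix}\right)$: then $\det A=3$, $A$ has eigenvalues $1$ and $3$, and $BA=\left(\begin{smallmatrix}1&-4\\0&3\end{smallmatrix}\right)$ again has $1$ as an eigenvalue. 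Since $A$ is realized by a linear degree-$3$ torus map (with linear lift $\tilde F=A$), every hypothesis of the lemma as literally stated is met and the conclusion fails. The lemma is only true in the setting where it is applied, namely when $F$ is a lift of a sphere map through $\pi:\T^2\to\T^2/\!\sim$, which forces $F\varphi=\varphi^{j}F$ for some $j$ and hence $ABA^{-1}=B^{j}$. So the constraint you merely gesture at is not an optional refinement; it is the whole content of the remaining cases, and no Cayley--Hamilton manipulation of $A$ and $B$ alone can substitute for it.

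Once you do invoke $ABA^{-1}=B^{j}$ the proof closes quickly, and you should finish it that way: in the $A$-eigenbasis, $ABA^{-1}=\left(\begin{smallmatrix}\alpha&\beta/d\\ d\gamma&\delta\end{smallmatrix}\right)$, while $B^{j}=c_jB+s_jI$ for real $c_j,s_j$ (Cayley--Hamilton for the finite-order matrix $B$), so $\beta/d=c_j\beta$ and $d\gamma=c_j\gamma$; since $|d|>1$ these cannot both hold with $\beta,\gamma\neq 0$, hence $B$ is triangular in a real basis and its eigenvalues $e^{\pm i\theta}$ are real --- impossible for $\theta\neq\pi$. Thus for orders $3,4,6$ the hypothesis ``$F_*$ has $1$ as an eigenvalue'' is in fact vacuous for equivariant $F$. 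For comparison, the paper's own proof is a different and much shorter computation: it writes $\varphi_*(v)=e^{i\theta}v$ and reads off $a=ae^{i\theta}$, $b=dbe^{i\theta}$ coordinatewise in the eigenbasis of $F_*$, thereby treating the rotation $\varphi_*$ as a scalar on a real two-dimensional space; that step is literally valid only when $e^{i\theta}$ is real, i.e.\ $\theta=\pi$. Your diagnosis that the other cases require the equivariance coming from the branched covering is therefore correct --- but you must actually carry it out, and you should record that the lemma needs that hypothesis added to be true.
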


\begin{proof} If $F_*$ has $1$ as an eigenvalue, then the other eigenvalue is $d$. Let $\{v_1,v_2\}$ be a basis of $\R^2$ such that $F_*(v_1)=v_1$ and $F_*(v_2)=dv_2$.  Assume that
there exists $v \in \R^2$ such that $\varphi_*F_*(v)=v$ (we want to show that $v=0$).  We write $v=av_1+bv_2$ and note that $F_*(v)=av_1 +b d v_2$.  Also, $\varphi_*(v)= e^{i\theta}v$
for all $v\in \R^2$.  Then, $\varphi_*F_*(v)=\varphi_*(av_1 +b d v_2)=a e^{i\theta}v_1 +db e^{i\theta}v_2$.  Solving for $\varphi_*F_*(v)=v$ we get $a=a e^{i\theta}$ and 
$b=db e^{i\theta}$.  From the first equation we get that etiher $a=0$ or $e^{i\theta} =1$.  Using that $\theta \neq 2k\pi,k\in \Z$, we get $a=0$.  From the second equation 
we get that either $b=0$ or $de^{i\theta}=1$.  Using $|d|>1$ we get $b=0$.
 
\end{proof}

\begin{lemma}\label{accion} Let $T: \R^2\to \R^2$ be $T(z)=e^{i\theta} z$, $\theta \neq 2k\pi,k\in \Z$, and let $p: \R^2 \to  \T^2$ be a covering projection
 such that $pT=\varphi p$, with  $\varphi:\T^2\to \T^2$. Let $\pi:\T^2 \to \T^2/\sim$ be the quotient map by the action of $\varphi$, and suppose that $\pi$
 is a branched covering $\pi:\T^2 \to S^2$. Let $f: S^2\to S^2$ such that there exists $F: \T^2  \to \T^2$ with $f\pi=\pi F$. Then, $f$ has the rate.
 
\end{lemma}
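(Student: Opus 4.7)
The plan is to transfer fixed-point counts from $\T^2$ to $S^2$ by applying Lefschetz--Nielsen theory to a carefully chosen lift of $f^n$. Since $\pi\varphi=\pi$ by construction of the quotient, the map $\varphi F$ satisfies $\pi(\varphi F)=\pi F=f\pi$, so it is a lift of $f$ alongside $F$; by induction both $F^n$ and $(\varphi F)^n$ are lifts of $f^n$. If $G$ is any lift of $f^n$ and $x\in\fix(G)$, then $f^n(\pi(x))=\pi G(x)=\pi(x)$, so $\pi(x)\in\fix(f^n)$; since each fiber of the branched covering $\pi$ has at most $k:=\deg(\pi)$ points, this yields the crucial transfer
$$ N_n f \;\geq\; k^{-1}\,|\fix(G)|. $$

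Next I would use Lemma \ref{vap1} to select a good lift. If $F_*$ has $1$ as an eigenvalue on $H_1(\T^2,\Z)=\Z^2$, the lemma says $(\varphi F)_*=\varphi_* F_*$ does not; by contraposition, if $(\varphi F)_*$ has $1$ as an eigenvalue, then $F_*$ does not. Either way, one of $F,\varphi F$ furnishes a lift $G$ whose induced action $G_*$ has no eigenvalue equal to $1$. Since $\varphi$ is orientation-preserving (being induced by a rotation), $\det G_*=\det F_*=d$, and the eigenvalues $\mu_1,\mu_2$ of $G_*$ are algebraic integers with $\mu_1\mu_2=d$ and $\mu_i\ne 1$. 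The classical fact that for torus self-maps $N(G)=|L(G)|=|\det(I-G_*)|$, combined with the lower bound $|\fix(G)|\geq N(G)$, gives
$$ |\fix(G^n)| \;\geq\; |L(G^n)| \;=\; |(1-\mu_1^n)(1-\mu_2^n)| $$
whenever $\mu_i^n\ne 1$ for $i=1,2$.

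What remains is to check that $|L(G^n)|$ grows at least like $|d|^n$ along an infinite subsequence. A case split on the moduli $|\mu_i|$ suffices. If both $|\mu_i|>1$, then $|L(G^n)|\sim|d|^n$; if $|\mu_1|<1<|\mu_2|$, then $|\mu_2|>|d|$ and $|L(G^n)|\sim|\mu_2|^n\geq|d|^n$; and the boundary case $|\mu_i|=1$ is constrained by the integrality of the characteristic polynomial of $G_*$ and $|d|>1$ to force $\mu_i=\pm 1$, so by the choice of $G$ we must have $\mu_i=-1$ and the other eigenvalue $=-d$, giving $|L(G^n)|\sim 2|d|^n$ on odd $n$. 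The case of two unit-modulus eigenvalues is impossible since then $|d|=1$. Combining everything, $N_{n_j}f\geq (c/k)|d|^{n_j}$ along an appropriate subsequence $(n_j)$, and taking $\limsup$ yields the desired rate inequality. The main obstacle is the unit-circle subcase, where one must use the algebraic structure of $2\times 2$ integer matrices with $|\det|>1$ to rule out exotic eigenvalues on $S^1$; the essential role of Lemma \ref{vap1} is precisely to block the degenerate situation $N(G)=0$ that would arise from $1$ being an eigenvalue of $G_*$.
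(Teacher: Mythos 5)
Your proof is correct and follows the same overall strategy as the paper: lift $f$ to $F$ or $\varphi F$ on $\T^2$, use Lemma~\ref{vap1} to guarantee one of the two lifts $G$ has $1\notin\mathrm{spec}(G_*)$, and push fixed-point counts down through the finite-to-one branched covering $\pi$. The only substantive difference is in how you obtain the torus rate inequality. The paper simply cites Theorem~1.2 of~\cite{bfgj} as a black box asserting that a torus map whose $H_1$-action has no eigenvalue $1$ satisfies the rate. You instead unpack this: you invoke the Anosov--Halpern identity $N(G)=|L(G)|=|\det(I-G_*)|$ for torus self-maps, reduce to $|L(G^n)|=|(1-\mu_1^n)(1-\mu_2^n)|$ with $\mu_1\mu_2=d$ (using $\det\varphi_*=1$ since $\varphi$ is a local conjugate of the rotation $T$, hence orientation-preserving), and carry out a correct case analysis on the moduli: complex-conjugate pairs have modulus $\sqrt{|d|}>1$ so both lie off $S^1$; a real split pair $|\mu_1|<1<|\mu_2|$ forces $|\mu_2|>|d|$; and a real unit-modulus eigenvalue must be $\pm1$, of which $1$ is excluded by the choice of $G$, leaving the $\mu_1=-1$, $\mu_2=-d$ case handled on odd $n$. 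This makes the argument more self-contained than the paper's, at the cost of being longer; both are valid, and both hinge on the same uses of Lemma~\ref{vap1} and the finiteness of the fibers of $\pi$.
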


\begin{proof} It follows  from Theorem 1.2 page 618 of \cite{bfgj} that if $F_*:H_1(\T,\Z)\to H_1(\T,\Z)$ does not have $1$ as an eigenvalue, then
$F$ satisfies the growth rate inequality $\limsup \frac{1}{n} \log N_nF\geq \log |d|$, where $N_nF$ denote the number of fixed points of $F^n$. Note now that the
covering projection $\pi$ has a finite number of sheets $l$. This implies that $f$ has the rate as  for every iterate $n$ the fixed points of $f^n$ are at least $1/l$  those of $F^n$.
Let us suppose now that $F_*:H_1(\T,\Z)\to H_1(\T,\Z)$ has $1$ as an eigenvalue.  By Lemma \ref{vap1} $\varphi_*F_*$ does not. Moreover, $\varphi F$ is also a  lift of $f$ by $\pi$, 
because $\pi\varphi F = \pi F = f\pi$ by the definition of $\pi$.  So, $\varphi F$ lifts $f$ to a finite-sheeted covering and $\varphi F$ satisfies the growth rate inequality. As 
already explained, this implies that $f$ does too. \\
\end{proof}

Given a Thurston map $f:S^2\to S^2$, we will consider branched coverings $\pi:T^2 \to S^2$ such that $\pi^{-1}(P_f)= B_{\pi}$, where $B_{\pi}$ denotes the set of branching points.  Then, restricting $\pi$ to 
$\tilde X= T^2\backslash B_{\pi}$ we obtain a covering space $\tilde X\to X=S^2\backslash P_f$.  In our setting, $\#P_f$ is either $3$ or $4$, then $X$ is homotopy equivalent to the 
wedge sum of $2$ or $3$ circles.  Note that $B_{\pi}$ must be non-empty, and
therefore $\tilde X$ is also homotopy equivalent to a wedge sum of $\#B_{\pi}+1$ circles.  Setting $Y= X\backslash f^{-1}(P_f)$ and 
$\tilde Y=T^2\backslash \pi^{-1}(f^{-1}(P_f))$ we obtain an associated covering space $\tilde Y\to Y$.  This again, modulo homotopy equivalence, is a covering space of the wedge sum 
of $n$ circles, and $n$ is dependent on $\deg(f)$. \\

We will use the  
covering spaces of the wedges of circles illustrated in Figure \ref{tabla}.   We view the wedge sum of $n$ circles corresponding to $X$ (or $Y$) as a graph with one vertex and $n$ edges.  We label the edges and choose orientations for them.  We
use
latin letters for $X$ and greek letters for $Y$.  Each covering space $\tilde X$  is also a graph that we label in such a way that the local picture near each
vertex is the same as in $X$. The covering projection sends all  vertices of $\tilde X$ to the vertex of $X$  and sends each edge of $\tilde X$ to
the edge of $X$ with the same label by a map that is a homeomorphism on the interior of the edge and preserves orientation.  So, for instance, the number of sheets of the covering
space $\tilde X \to X$ is $2,4,6$ and $3$ for the signatures $(2,2,2,2)$, $(2,4,4)$, $(2,3,6)$ and $(3,3,3)$ respectively. Of course analogous considerations can be made for the 
covering spaces $\tilde Y \to Y$.\\

The first two coverings appearing in the picture correspond to the classical square model of the torus, and the last two correspond to the
hexagonal model.  All of the coverings considered are normal. More details are given in each corresponding section.

\begin{figure}
 {\includegraphics[scale = 0.75]{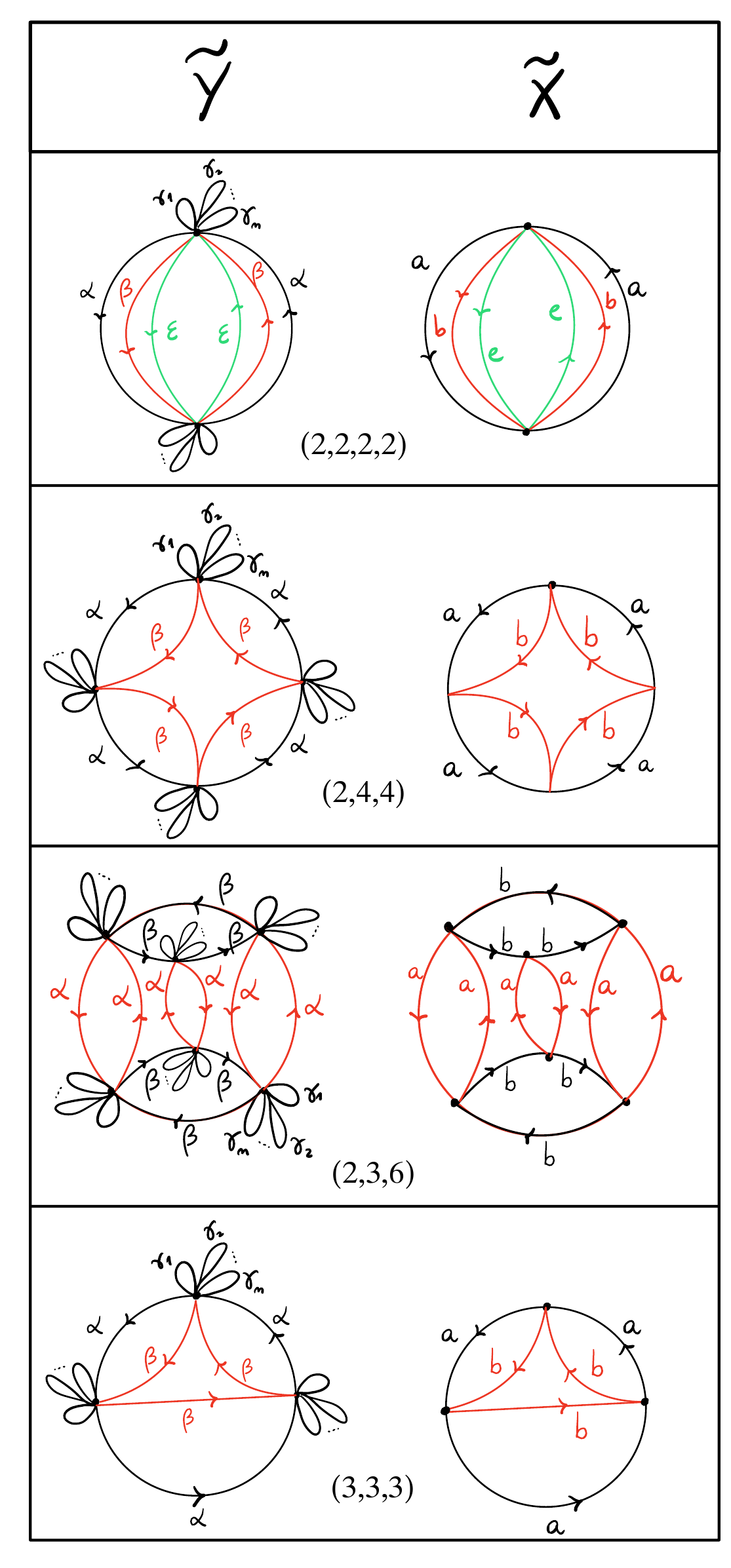}}
\caption{\label{tabla}  }
\end{figure}

\newpage 

\section{$(\bm{2},\bm{2},\bm{2},\bm{2})$}

Let $\Gamma$ be the subgroup of  automorphisms of the complex plane $\C$ generated by $z\mapsto z+1$, $z\mapsto z+i$  and $z\mapsto -z$.  The quotient space $\C/\Gamma$ is
topologically a sphere, 
and the quotient projection $\pi: \C/<z\mapsto z+1, z\mapsto z+i>\to \C/\Gamma$ is a two-fold branched covering of the torus $T^2=\C/<z\mapsto z+1, z\mapsto z+i>$ onto the sphere
with four branched points $b_i, i=1,\ldots,4$.\\

Let $f:S^2\to S^2$ be a branched covering in the class $(2,2,2,2)$, that is, there are four post-critical points $p_1, p_2, p_3, p_4$ which are regular,
 and  such that the local
degree at every  point in their preimage outside $P_f$ is $2$. Assume that $\pi(b_i) = p_i$, $ i=1,\ldots,4$.
Then, $\pi: \tilde X = T^2\backslash \{b_i: i=1,\ldots,4\}\to S^2\backslash P_f =X$ is a normal covering space projection.  Let 
$Y =  X\backslash f^{-1} (P_f)$ and $\tilde Y = T^2\backslash \pi^{-1}(f^{-1} (P_f))$. 

\begin{lemma}  The map $f\pi|_{\tilde Y}:\tilde Y\to X$ lifts to $\tilde X$.
 
\end{lemma}

\begin{proof} 

By Proposition \ref{lift}, it is enought to check that 
$f_*{\pi|_{\tilde Y}}_*(\pi_1 (\tilde Y,\tilde y_0))\subset \pi_* (\pi_1 (\tilde X,\tilde x_0))$. Note that $\pi_1 ( X, x_0)=\left<a,b,e\right>$ where  $a,b$ and $e$ are the homotopy
classes of three loops based at $x_0$
freely homotopic to 
$\sigma_{p_i}$, $i=1,2,3$ respectively. Then,
$$G=\pi_* (\pi_1 (\tilde X,\tilde x_0))=\left<a^2, b^2, e^2, ab, be\right>.\\$$\noindent  We make the identification 
$H_1(X,\Z)\simeq \Z [a]\oplus \Z[b]\oplus \Z[e]$, and write $[\gamma]=m[a] + n [b]+l [e] \in H_1(X,\Z), m,n,l \in \Z$ for $\gamma\in \pi_1(X,x_0) $. Note also 
that
$\gamma \in G$ if and only if $[\gamma]$ verifies $m+n+l =0 \mod 2$ (See Figure \ref{tabla}).\\

Take $y_0=\pi(\tilde y_0)$ and let $\left<\alpha, \beta,\epsilon ,\gamma_1, \ldots, \gamma_n\right>$ generate $\pi_1(Y,y_0)$, where the generators correspond to the punctures at $p_1,p_2,p_3$ and their $n$ 
preimages outside 
$P_f$. That is, each
loop in the generating set is freely homotopic to $\sigma_x$ for some $x\in f^{-1}(P_f)$. Then, 
$$H={\pi|_{\tilde Y}}_*(\pi_1 (\tilde Y,\tilde y_0))=\left < \alpha^2, \beta^2,\alpha\beta,\beta\epsilon, \gamma_i,\alpha \gamma_i\alpha^{-1}, i=1\ldots, n\right>.$$\\
\noindent We make the identification 
$H_1(Y,\Z)\simeq \Z[\alpha]\oplus\Z[\beta]\oplus\Z[\epsilon]\oplus \Z[\gamma_1]\oplus \ldots \oplus \Z[\gamma_n]$, and write
$[\gamma]=m[\alpha]+n[\beta]+ l[\epsilon] + k_1[\gamma_1]+ \ldots+k_n[\gamma_n] \in H_1(Y,\Z), m,n,l, k_i \in \Z, i=1,\ldots,n$ for $\gamma\in \pi_1(Y,y_0) $. Note  
that
$\gamma \in H$ if and only if $[\gamma]$ verifies $m+n+l =0 \mod 2$.\\

  As the points in $P_f$ are regular and $P_f$ is invariant, we get $f_{\#} ([x])=[y]$, where $x=a,b,e$ and $y$ can be either $a, b, e$ or $(abe)^{-1}$ (note that the homology class of 
  a loop in $X$ freely homotopic to $\sigma_{p_4}$ is $-([a]+[b]+[e])$). Besides, as  the critical points are locally $2:1$, we have 
  $f_{\#}([\gamma_i]) = 2[y]$ ( $y$ can be either $a, b, e$ or $(abe)^{-1}$).  Then, 
  $$f_{\#}(m[\alpha]+n[\beta]+ l[\epsilon] + k_1[\gamma_1]+ \ldots+k_n[\gamma_n])=\\$$
  
  $$(m'+\sum_{j:f(c_j)=p_1} 2k_j)[a] + (n'+\sum_{j:f(c_j)=p_2} 2k_j)[b]+\\$$
  
  $$(l'+\sum_{j:f(c_j)=p_3} 2k_j)[e]+(r+\sum_{j:f(c_j)=p_4} 2k_j)([a]+[b]+[e]),$$\noindent where $m+n+l=m'+n'+l'+r$.  It follows that $f_{*}(\gamma)\in G$ if $\gamma \in H$ as we
  wanted.\\

\end{proof}

The previous lemma gives a map $\tilde f: \tilde Y \to 
\tilde X$ such that $\pi \tilde f = f\pi$.

\begin{lemma}\label{ext} The lift $\tilde f:\tilde Y\to \tilde X$ extends continuously to  $F: T^2 \to T^2$ and $\deg(f) = \deg (F)$.
 
\end{lemma}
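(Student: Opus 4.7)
The plan is to mimic the proof of the analogous extension lemma from the $(2,2,\infty)$ section, which used exactly the same mechanism: $\pi^{-1}$ is well defined on $P_f$ because $P_f$ consists precisely of the critical values of $\pi$ (each with a singleton fiber $\{b_i\}$). First I would \emph{define} the extension $F:T^2\to T^2$ piecewise: set $F(x)=\tilde f(x)$ for $x\in\tilde Y$, and for each $x$ in the finite set $\pi^{-1}(f^{-1}(P_f))=T^2\setminus\tilde Y$, set $F(x)=\pi^{-1}(f\pi(x))$. This second clause makes sense because $\pi(x)\in f^{-1}(P_f)$ implies $f\pi(x)\in P_f$, and every point of $P_f$ has a unique $\pi$-preimage among the $b_i$. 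So $F$ sends such an $x$ to the branching point $b_j$ with $\pi(b_j)=f\pi(x)$.

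Next I would verify continuity at an arbitrary $x_0\in T^2\setminus\tilde Y$. Write $F(x_0)=b_j$, so $\pi(b_j)=p_j=f\pi(x_0)$. For any sequence $x_n\to x_0$ with $x_n\in\tilde Y$, the relation $\pi\tilde f=f\pi|_{\tilde Y}$ gives $\pi(\tilde f(x_n))=f\pi(x_n)\to p_j$. Since $T^2$ is compact, any subsequence of $\tilde f(x_n)$ has a further convergent subsequence with limit $y$ satisfying $\pi(y)=p_j$; as $\pi^{-1}(p_j)=\{b_j\}$ we get $y=b_j$. Hence $\tilde f(x_n)\to b_j=F(x_0)$, and $F$ is continuous on $T^2$.

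Finally, for the degrees, the defining identity $\pi F=f\pi$ extends from $\tilde Y$ to all of $T^2$ by continuity. Since the (topological) degree is multiplicative for maps between closed oriented surfaces, $\deg(\pi)\deg(F)=\deg(f)\deg(\pi)$, and because $\deg(\pi)=2\neq 0$ we conclude $\deg(F)=\deg(f)$.

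The only genuine technical point, and the one I expect to be the main obstacle, is the continuity check at the newly added points: it depends essentially on the particular feature that $P_f$ coincides with the critical value set of $\pi$ and that each $p_i$ has a singleton fiber $\{b_i\}$ in $T^2$. This is precisely what makes the pointwise definition of $F$ on $T^2\setminus\tilde Y$ both unambiguous and compatible with the limits of $\tilde f$ from $\tilde Y$, even though $\pi$ fails to be an honest covering near $B_\pi$.
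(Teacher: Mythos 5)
Your proof is correct and follows essentially the same approach as the paper: the same piecewise definition of $F$ (using that each $p_i\in P_f$ has a singleton $\pi$-fiber $\{b_i\}$), and the same multiplicativity-of-degree computation $\deg(\pi)\deg(F)=\deg(f)\deg(\pi)$ to conclude $\deg(F)=\deg(f)$. The paper simply asserts continuity ``by construction,'' whereas you supply the subsequence argument explicitly, which is a reasonable and welcome expansion of the same step rather than a different route.
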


\begin{proof}  Note that the critical values of $\pi$ are the points $p_i$, $i=1,\ldots,4$. That is, the  $p_i$'s are the only points in $S^2$ that have only one preimage by $\pi$, 
namely the points $b_i\in T^2$, $i=1,\ldots,4$. So, if $ x\in \pi^{-1}(f^{-1}(P_f)$, then $f\pi(x)\in P_f$  and so $f\pi(x)$ has a uniquely defined preimage by $\pi$. We can now define 
$F(x) = \pi^{-1}f\pi(x)$ if $x\notin Y$ and $F(x)= \tilde f(x)$ if $x\in Y$.  The resulting map $F$ is  continuous by construction.
To show that  $\deg(f) = \deg (F)$, just note that we are considering maps  between connected closed orientable two manifolds, and therefore the degree formula holds:
 $\deg(f)\deg(\pi)=\deg(f\pi)=\deg(\pi F)=\deg(\pi)\deg(F)$.\\

\end{proof}

As a consequence, we obtain the following result:

\begin{teo} Let $f:S^2\to S^2$ be a Thurston map with $(2,2,2,2)$ orbifold.  Then $f$ has the rate.
 
\end{teo}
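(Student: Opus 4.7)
The plan is to deploy Lemma \ref{accion} essentially verbatim, with the involution $z\mapsto -z$ playing the role of the rotation $T$. The setup at the top of the section already identifies $\pi:T^2\to S^2$ as the quotient by the order-two automorphism $\varphi:T^2\to T^2$ induced by $-\mathrm{id}$ on $\C$. Concretely, I would take $T:\C\to\C$, $T(z)=e^{i\pi}z=-z$, and $p:\C\to T^2=\C/\langle z\mapsto z+1,\, z\mapsto z+i\rangle$ the standard covering; then $pT=\varphi p$, and since $\theta=\pi\neq 2k\pi$, the hypothesis on the rotation angle in Lemma \ref{accion} is satisfied.

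The two preceding lemmas of the section already supply the remaining hypothesis: there is a continuous map $F:T^2\to T^2$ with $\deg F=\deg f$ and $\pi F=f\pi$ (obtained by lifting $f\pi|_{\tilde Y}$ through the covering $\pi|_{\tilde X}$ and then extending continuously across the branch fibres using that each point of $\pi^{-1}(f^{-1}(P_f))$ lies in the one-point fibre over a branch value). Since $\pi$ is the quotient by the action of $\varphi$, we also have $\pi=\pi\varphi$, so both $F$ and $\varphi F$ are honest lifts of $f$ through $\pi$.

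With these ingredients in place, I would simply invoke Lemma \ref{accion}: either the induced map $F_*$ on $H_1(T^2,\Z)$ does not have $1$ as an eigenvalue, in which case the theorem of \cite{bfgj} quoted in the proof of Lemma \ref{accion} shows that $F$ satisfies the growth rate inequality, or else $F_*$ does have $1$ as an eigenvalue, in which case Lemma \ref{vap1} applied to $T(z)=-z$ guarantees that $(\varphi F)_*=\varphi_*F_*$ does not, so $\varphi F$ satisfies the growth rate inequality. In either case the lift is a finite-sheeted (in fact two-sheeted) cover of $f$, so fixed points of the $n$th iterate descend with multiplicity at most $2$, and the growth rate passes from the lift to $f$.

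I expect no serious obstacle: all the real work—constructing the lift $\tilde f$ by verifying the parity condition $m+n+l\equiv 0\pmod 2$ on $H_1$, extending it continuously to $F$, and matching degrees via $\deg(f)\deg(\pi)=\deg(\pi)\deg(F)$—has already been carried out in the two preceding lemmas. The present theorem is essentially a one-line application of Lemma \ref{accion}, and the only thing to double-check is the trivial but necessary identification of the involution $z\mapsto -z$ as the rotation by angle $\pi$ required by Lemmas \ref{vap1} and \ref{accion}.
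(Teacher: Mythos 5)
Your proposal is correct and follows exactly the paper's own one-line argument: invoking Lemma \ref{accion} with $T(z)=-z$ and $p:\R^2\to\T^2=\R^2/\Z^2$, relying on the two preceding lemmas of the section for the existence of the lift $F$ with $\deg F=\deg f$. The extra unpacking of Lemma \ref{accion}'s internal dichotomy is accurate but not something the paper repeats at this point.
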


\begin{proof} It is enough to note that we are in the hypothesis of Lemma \ref{accion} with $T(z)=-z$ and  $p: \R^2 \to  \T^2=\R^2/\Z^2$ the standard quotient projection.
 
\end{proof}

In \cite{iprx5} we gave an interesting example in this class:  a map with a $2:1$ invariant indecomposable continuum that is not annular (we
say that a subset of the sphere is annular if it has a neighbourhood homeomorphic to the open annulus). It is not known if it
is possible to have a $2:1$ invariant indecomposable continuum for a rational map of the sphere, and the previously known examples for continuous maps were all annular.  We constructed
the example projecting an Anosov endomorphism in the torus to the sphere via the branched covering projection $\pi: \C/<z\mapsto z+1, z\mapsto z+i>\to \C/\Gamma$, and then performing
a ``derived form pseudo-Anosov'' construction turning  two (previously hyperbolic) fixed points into attractors.  The result is a Wada-lake  totally invariant continuum with
an infinite number of lakes, one for each preimage of the basins of attraction of the fixed points.

\section{$(\bm{2},\bm{4},\bm{4})$} 
Let $\Gamma$ be the subgroup of  automorphisms of the complex plane $\C$ generated by $z\mapsto z+1$, $z\mapsto z+i$  and $z\mapsto iz$.  The quotient space $\C/\Gamma$ is
topologically a sphere, 
and the quotient projection $\pi: \C/<z\mapsto z+1, z\mapsto z+i>\to \C/\Gamma$ is a four-fold branched covering of the torus $T^2=\C/<z\mapsto z+1, z\mapsto z+i>$ onto the sphere
with four branched points $b_i, i=1,\ldots,4$ such that two of them are locally $z\mapsto z^4$ (say $b_1, b_2$) but two of them are locally $z\mapsto z^2$ ($b_3$ and $b_4$). The image of these points under the branched
covering projection is a set of three points; $b_3$ and $b_4$ are mapped to the same point under the branched covering projection.\\

Let $f:S^2\to S^2$ be a branched covering in the class $(2,4,4)$, that is, $P_f= \{d,p,q\}$, $\nu(d)=2$, $\nu(p)=\nu(q)=4$. Note that the set $\{p,q\}$ is $f$-invariant, and 
that $p$ and $q$ are regular points on account that the formula $\nu(f(x))=\nu(x)\deg_x f$ holds for all $x\in S^2$.  Moreover,
if $c$ is critical in $f^{-1}(P_f)$, then $\deg_cf$ is either $2$ or $4$. Assume that 
$\pi(\{b_i: i=1,\ldots,4\}) = \{d, p, q\}$ and that $d=\pi(b_3)=\pi(b_4)$.
Then, $\pi: \tilde X = T^2\backslash \{b_i: i=1,\ldots,4\}\to S^2\backslash \{d, p,q\} = X$ is a normal covering space projection.  Let $Y =   X\backslash f^{-1}(P_f)$
and $\tilde Y = T^2\backslash \pi^{-1}(f^{-1}(P_f))$. 

\begin{lemma}  The map $f\pi|_{\tilde Y}:\tilde Y\to X$ lifts to $\tilde X$.
 
\end{lemma}

\begin{proof} As before, we need to ensure that 
$f_*{\pi|_{\tilde Y}}_*(\pi_1 (\tilde Y,\tilde y_0))\subset \pi_* (\pi_1 (\tilde X,\tilde x_0))$.  Note that $\pi_1 ( X, x_0)=\left<a,b\right>$ 
($a$ and $b$ the homotopy class of two loops based at $x_0$ freely homotopic to $\sigma_p$ and $\sigma_q$ respectively) and
$$G=\pi_* (\pi_1 (\tilde X,\tilde x_0))=\left<a^4, b^4, (ab)^2,a^2ba,a^2b^2\right>.$$\\ \noindent  We make the identification 
$H_1(X,\Z)\simeq \Z [a]\oplus \Z[b]$, and write $[\gamma]=m[a] + n [b] \in H_1(X,\Z), m,n \in \Z$ for $\gamma\in \pi_1(X,x_0) $. Note also 
that
$\gamma \in G$ if and only if $[\gamma]$ verifies $m+n =0 \mod 4$ (See Figure \ref{tabla}).\\

Take $y_0=\pi(\tilde y_0)$ and let $\left<\alpha, \beta,\gamma_1, \ldots, \gamma_n\right>$ generate $\pi_1(Y,y_0)$, where the generators correspond to the punctures at $p,q$ and 
their $n$ 
preimages outside 
$P_f$. That is, each
loop in the generating set is freely homotopic to $\sigma_x$ for some $x\in f^{-1}(P_f)$. Then, 
$$H={\pi|_{\tilde Y}}_*(\pi_1 (\tilde Y,\tilde y_0))=\\$$
$$\left < \alpha^4, \beta^4,(\alpha\beta)^2,\alpha^2\beta\alpha,\alpha^2\beta^2, \gamma_i,\alpha \gamma_i\alpha^{-1}, 
\alpha^2\gamma_i\alpha^{-2}, \alpha^3\gamma_i\alpha^{-3}, i=1\ldots, n\right>.$$\\
\noindent  We make the identification 
$H_1(Y,\Z)\simeq \Z[\alpha]\oplus\Z[\beta]\oplus \Z[\gamma_1]\oplus \ldots \oplus \Z[\gamma_n]$, and write
$[\gamma]=m[\alpha]+n[\beta] + k_1[\gamma_1]+ \ldots+k_n[\gamma_n] \in H_1(Y,\Z), m,n, k_i \in \Z, i=1,\ldots,n$ for $\gamma\in \pi_1(Y,y_0) $. Note  
that
$\gamma \in H$ if and only if $[\gamma]$ verifies $m+n=0 \mod 4$.\\

As the set $\{p,q\}$ is $f$-invariant, and 
 $p$ and $q$ are regular points, we have $f_{\#} ([\alpha])$ and $f_{\#} ([\beta])$ are either $[\alpha]$ or $[\beta]$.   Every critical point $c_i\notin P_f$ that is $2:1$
 satisfy $f(c_i)=d$ with homology class $-([a]+[b])$. Then,
 $f_{\#} ([\gamma_i])=2(-[a]-[b])$, or 
$f_{\#} ([\gamma_i])=4[x]$, where $x$ is either $a$ or $b$.

 Then, 
  $$f_{\#}(m[\alpha]+n[\beta]+ k_1[\gamma_1]+ \ldots+k_n[\gamma_n])=\\$$

  $$(m'+\sum_{j:f(c_j)=p} 4k_j)[a] + (n'+\sum_{j:f(c_j)=q} 4k_j)[b]+(\sum_{j:f(c_j)=d} 2k_j)(-[a]-[b]),$$\noindent where $m+n=m'+n'$. 
  It follows that $f_{*}(\gamma)\in G$ if $\gamma \in H$ as wanted.\\

\end{proof}

By the previous lemma there exists a map $\tilde f: \tilde Y \to \tilde X$ such that $\pi \tilde f = f\pi$.

\begin{lemma}\label{ext2} The lift $\tilde f:\tilde Y\to \tilde X$ extends continuously to  $F: T^2 \to T^2$ and $\deg(f) = \deg (F)$.
 
\end{lemma}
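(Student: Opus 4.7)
The plan is to follow closely the argument of Lemma \ref{ext}, the only new subtlety being that the point $d\in P_f$ is a critical value of $\pi$ with \emph{two} preimages $b_3,b_4$ in $T^2$, rather than a single one. So for $x\in\pi^{-1}(f^{-1}(P_f))$ with $f\pi(x)=d$, the condition $\pi F=f\pi$ does not determine $F(x)$ by itself, and the main obstacle is to show that continuity of the extension forces a canonical choice between $b_3$ and $b_4$.

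I would first set $F|_{\tilde Y}=\tilde f$ and then, for each $x\in\pi^{-1}(f^{-1}(P_f))$, determine $F(x)$ according to the value of $f\pi(x)\in P_f=\{d,p,q\}$. If $f\pi(x)\in\{p,q\}$, then, just as in Lemma \ref{ext}, the point $f\pi(x)$ has a unique preimage under $\pi$ (namely $b_1$ or $b_2$, both of local degree $4$), and I define $F(x)$ to be that preimage. Continuity at such an $x$ follows immediately: for $x_n\to x$ in $\tilde Y$ one has $\pi(\tilde f(x_n))=f\pi(x_n)\to f\pi(x)$, and small neighborhoods of $p$ (resp.\ $q$) pull back under $\pi$ to connected neighborhoods of the unique preimage $b_1$ (resp.\ $b_2$), forcing $\tilde f(x_n)\to F(x)$.

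The main step is the case $f\pi(x)=d$. Here the plan is to fix a small connected disk $V\subset S^2$ containing $d$, small enough that $\pi^{-1}(V)=V_3\sqcup V_4$ where $V_3,V_4$ are disjoint connected open neighborhoods of $b_3,b_4$ respectively. Then take a connected open neighborhood $U$ of $x$ in $T^2$ small enough that $U\cap\pi^{-1}(f^{-1}(P_f))=\{x\}$ and $f\pi(U)\subset V$. Since $U\setminus\{x\}\subset\tilde Y$ is connected, its image $\tilde f(U\setminus\{x\})$ is connected and contained in $V_3\sqcup V_4$, so it lies entirely in one of them, say $V_3$; define $F(x)=b_3$. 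Continuity at $x$ then follows: any sequence $x_n\to x$ in $\tilde Y$ eventually lies in $U$, so $\tilde f(x_n)\in V_3$ with $\pi(\tilde f(x_n))\to d$, and the only limit point in $\overline{V_3}\cap\pi^{-1}(d)$ is $b_3$. Sequences in the finite (hence discrete) set $T^2\setminus\tilde Y$ converging to $x$ are eventually constant, so no other cases arise.

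Once $F:T^2\to T^2$ is shown to be continuous with $\pi F=f\pi$, the equality $\deg(f)=\deg(F)$ follows, exactly as at the end of Lemma \ref{ext}, from the multiplicativity of degree for maps between closed connected oriented surfaces: $\deg(\pi)\deg(F)=\deg(\pi F)=\deg(f\pi)=\deg(f)\deg(\pi)$, and $\deg(\pi)=4\neq 0$ allows cancellation.
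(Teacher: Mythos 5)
Your proposal is correct and follows essentially the same route as the paper: extend $F$ pointwise over $T^2\setminus\tilde Y$ using a local analysis of $\tilde f$ near each puncture, then deduce the degree equality from multiplicativity. The paper's proof is terser (it simply asserts that $\tilde f$ restricts to a $k:1$ covering of a punctured neighborhood of $x$ onto a punctured neighborhood of a single point $y$), whereas you make explicit the one genuinely new subtlety — that $d$ has two $\pi$-preimages $b_3,b_4$ — and resolve it by the connectedness argument that selects a unique sheet; this is exactly the content underlying the paper's assertion, so the two arguments coincide in substance.
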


\begin{proof}  Note that for every point $x\in T^2\backslash \tilde Y$ there exists an open neighbourhood $U\subset T^2$ such that 
$\tilde f|_{U\backslash x}: U\backslash x \to \tilde f (U\backslash x)$ is a $k:1$ covering onto its image $V\backslash y$, where $V\subset T^2$ is an open neighbourhood of
the point $y$ and $k\geq 1$.  Defining $F(x)=y$ for $x\in T^2\backslash \tilde Y$ and $F(x)=\tilde f(x)$ on $\tilde Y$ gives a continuous extension. The statement about the degree 
follows as in Lemma \ref{ext}.
 
\end{proof}

As in the previous cases, one has:

\begin{teo} Let $f:S^2\to S^2$ be a Thurston map with $(2,4,4)$ orbifold.  Then $f$ has the rate.
 
\end{teo}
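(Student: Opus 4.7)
The plan is to reduce the theorem directly to Lemma \ref{accion} applied with the rotation $T(z)=iz$. First I would identify the ingredients of the hypothesis. Take $p:\R^2\to\T^2=\R^2/\Z^2$ to be the standard covering projection. Since $i\cdot 1=i$ and $i\cdot i=-1$ both lie in the Gaussian lattice, the rotation $T(z)=iz$ preserves it and therefore descends to a self-homeomorphism $\varphi:\T^2\to\T^2$ satisfying $pT=\varphi p$. The quotient of $\T^2$ by the action of $\varphi$ is exactly $\C/\Gamma$ with $\Gamma=\left<z\mapsto z+1,\, z\mapsto z+i,\, z\mapsto iz\right>$, which by the discussion at the beginning of this section is topologically a sphere with quotient map $\pi:\T^2\to S^2$ a four-fold branched covering; this is precisely the same $\pi$ used throughout.

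Next I would assemble the factorization required by Lemma \ref{accion}. The two preceding lemmas of this section yield a continuous map $F:\T^2\to\T^2$ satisfying $f\pi=\pi F$ and $\deg(F)=\deg(f)$. Since $|\deg(f)|>1$ is part of our hypothesis, $F$ inherits that property, and the angle $\theta=\pi/2$ clearly satisfies $\theta\neq 2k\pi$ for any $k\in\Z$.

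With all the hypotheses of Lemma \ref{accion} in place, the conclusion that $f$ has the rate is immediate. There is really no obstacle here: all the work has been done in the preceding lemmas (the lifting criterion and the continuous extension from $\tilde Y$ to all of $T^2$) together with the general Lemma \ref{accion}, which packages the Lefschetz/homological eigenvalue argument and the trick of precomposing with $\varphi$ to rule out $1$ as an eigenvalue. The only step worth checking explicitly is that $T(z)=iz$ is the rotation built into the very definition of $\Gamma$ in this section, so the quotient $\T^2/\varphi$ is by construction the target sphere.
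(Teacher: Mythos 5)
Your argument is correct and is precisely the paper's proof: the paper simply writes ``Exactly as in the previous case we are in the hypothesis of Lemma~\ref{accion} with $T(z)=iz$ and $p:\R^2\to\T^2=\R^2/\Z^2$ the standard quotient projection.'' You have merely unpacked the implicit verifications (that $T$ preserves the Gaussian lattice, that $\T^2/\varphi=\C/\Gamma$ is the branched cover in play, that the two preceding lemmas supply $F$ with $f\pi=\pi F$ and $\deg F=\deg f$), all of which the paper leaves to the reader.
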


\begin{proof} Exactly as in the previous case we are in the hypothesis of Lemma \ref{accion} with $T(z)=iz$ and  $p: \R^2 \to  \T^2=\R^2/\Z^2$ the standard quotient projection.\\
 
\end{proof}

The last two cases  make use of the hexagonal model of the torus $T^2$.\\

\section{$(\bm{2},\bm{3},\bm{6})$}  
Let $\Gamma$ be the subgroup of  automorphisms of the complex plane $\C$ generated by $z\mapsto z+e^{i\frac{\pi}{3}}$, $z\mapsto z+1$,  
and $z\mapsto e^{i\frac{\pi}{3}}z$.  The quotient space $\C/\Gamma$ is
topologically a sphere, 
and the quotient projection $\pi: \C/<z\mapsto z+e^{i\frac{\pi}{3}}, z\mapsto z+1>\to \C/\Gamma$ is a six-fold branched covering of the torus $T^2=\C/<z\mapsto z+e^{i\frac{\pi}{3}},
z\mapsto z+1>$ onto the sphere
with six branched points $b_i, i=1,\ldots,6$, where $b_1$ is locally $z\mapsto z^6$, $b_2, b_3$ and $b_4$ are locally $z\to z^2$, $b_5$ and $b_6$ are locally $z\to z^3$. \\

Let $f:S^2\to S^2$ be a branched covering in the class $(2,3,6)$, that is,  $P_f=\{p,q,r\}$ such that $\nu(p)=2, \nu(q)=3, \nu(r)=6$. Assume that the image of 
these branched points under the
branched
covering projection is as follows: $\pi(b_1)=p, \pi(b_2)=\pi(b_3)=\pi(b_4)=q, \pi(b_5)= \pi(b_6)=r$. Then, $\pi: \tilde X = \T\backslash \{b_i: i=1,\ldots,6\}\to S^2\backslash \{p,q,r\} = X$ is a normal covering space projection.  Let $Y =   X\backslash f^{-1}(P_f)$
and $\tilde Y = T^2\backslash \pi^{-1}(f^{-1}(P_f))$.

\begin{lemma}  The map $f\pi|_{\tilde Y}:\tilde Y\to X$ lifts to $\tilde X$; that is, there exists $\tilde f:\tilde Y\to \tilde X$ such that $f\pi|_{\tilde Y}=\pi\tilde f$.
 
\end{lemma}

\begin{proof} 

 We need then to ensure that 
$f_*{\pi|_{\tilde Y}}_*(\pi_1 (Y,y_0))\subset \pi_* (\pi_1 (\tilde X,\tilde x_0))$.  Note that $\pi_1 ( X,\tilde x_0)=\left <a,b\right >$ 
($a$ and $b$ the homotopy class of two loops based at $x_0$ freely homotopic to $\sigma_q$ and $\sigma_r$ respectively) and
$$G=\pi_* (\pi_1 (\tilde X,\tilde x_0))=\left <a^2, b^3,a^{-1}b^{-1}ab,ab^3a^{-1},b^{-1}a^{-2}b,b^{-1}ab^{-1}a^{-1}b^2,b^{-2}a^2b^2\right>.$$ \noindent We make the identification 
$H_1(X,\Z)\simeq \Z [a]\oplus \Z[b]$, and write $[\gamma]=m[a] + n [b] \in H_1(X,\Z), m,n \in \Z$ for $\gamma\in \pi_1(X,x_0) $. Note also 
that
$\gamma \in G$ if and only if $[\gamma]$ verifies $3m+2n =0 \mod 6$ (See Figure \ref{tabla}).\\

Take $y_0=\pi(\tilde y_0)$ and let $\left<\alpha, \beta,\gamma_1, \ldots, \gamma_n\right>$ generate $\pi_1(Y,y_0)$, where the generators correspond to the punctures at $p,r$ and 
their $n$ 
preimages outside 
$P_f$. That is, each
loop in the generating set is freely homotopic to $\sigma_x$ for some $x\in f^{-1}(P_f)$. Then, 
$$H={\pi|_{\tilde Y}}_*(\pi_1 (\tilde Y,\tilde y_0))=\\$$

$$ < \alpha^2, \beta^3,\alpha^{-1}\beta^{-1}\alpha\beta,\alpha\beta^3\alpha^{-1},\beta^{-1}\alpha^{-2}\beta,\beta^{-1}\alpha\beta^{-1}\alpha^{-1}\beta^2,\beta^{-2}\alpha^2\beta^2,
$$
$$\gamma_i,\alpha \gamma_i\alpha^{-1}, \alpha\beta^{-1}\gamma_i\beta\alpha^{-1}, \alpha\beta\gamma_i\beta^{-1}\alpha^{-1},\beta\gamma_i\beta^{-1}, \beta^{-1}\gamma_i\beta,
  i=1\ldots, n>.$$\\
\noindent  We make the identification 
$H_1(Y,\Z)\simeq \Z[\alpha]\oplus\Z[\beta]\oplus \Z[\gamma_1]\oplus \ldots \oplus \Z[\gamma_n]$, and write
$[\gamma]=m[\alpha]+n[\beta] + k_1[\gamma_1]+ \ldots+k_n[\gamma_n] \in H_1(Y,\Z), m,n, k_i \in \Z, i=1,\ldots,n$ for $\gamma\in \pi_1(Y,y_0) $. Note  
that
$\gamma \in H$ if and only if $[\gamma]$ verifies $3m+2n=0 \mod 6$.\\

Note that the equation $\deg_xf . \nu(x) = \nu(f(x))$ for all $x\in S^2$  implies  $f(p)=p$, $f(q)$ is either $q$ or $p$, and $f(r)$ is either $r$ or $p$.\\

 Then, 
  $$f_{\#}(m[\alpha]+n[\beta]+ k_1[\gamma_1]+ \ldots+k_n[\gamma_n])=\\$$

  $$(m(1-\delta_{qp})+\sum_{j:f(c_j)=q} 2k_j)[a] + (n(1-\delta_{rp})+\sum_{j:f(c_j)=r} 3k_j)[b]+(3m\delta_{qp}+2n\delta_{rp}+\sum_{j:f(c_j)=p} 6k_j)(-[a]-[b]).$$\noindent 
  
  It follows that $f_{*}(\gamma)\in G$ if $\gamma \in H$ as wanted.\\

\end{proof}

By the previous lemma there exists a map $\tilde f: \tilde Y \to \tilde X$ such that $\pi \tilde f = f\pi$. Moreover, $\tilde f$ extends continuously to  $F: T^2 \to T^2$ and
$\deg (F)=\deg(f)$ (the proof goes exactly as in Lemma \ref{ext2}).\\

As in the previous cases, one has:

\begin{teo} Let $f:S^2\to S^2$ be a Thurston map with $(2,3,6)$ orbifold.  Then $f$ has the rate.
 
\end{teo}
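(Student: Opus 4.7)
The plan is to mimic the arguments for the $(2,2,2,2)$ and $(2,4,4)$ cases and reduce to Lemma \ref{accion}. The lifting lemma above already produces $\tilde f:\tilde Y\to \tilde X$ with $\pi\tilde f=f\pi|_{\tilde Y}$, and, as the authors remark, the same argument used in Lemma \ref{ext2} extends $\tilde f$ continuously to a map $F:T^2\to T^2$ of the same degree as $f$, satisfying $\pi F=f\pi$ on all of $T^2$. Once this is in hand, the theorem is a direct application of Lemma \ref{accion}.

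To apply Lemma \ref{accion}, I take $T(z)=e^{i\pi/3}z$ and $p:\R^2\to \T^2=\R^2/\langle z\mapsto z+1,\,z\mapsto z+e^{i\pi/3}\rangle$ the standard quotient projection associated with the hexagonal lattice. Because $T$ permutes the lattice $\langle 1,e^{i\pi/3}\rangle$ (rotation by $\pi/3$ sends $1\mapsto e^{i\pi/3}$ and $e^{i\pi/3}\mapsto e^{i2\pi/3}=e^{i\pi/3}-1$, both lattice vectors), the map $T$ descends to an automorphism $\varphi:\T^2\to \T^2$ with $pT=\varphi p$. The quotient of $\T^2$ by the cyclic group generated by $\varphi$ is precisely the branched covering $\pi:T^2\to S^2=\C/\Gamma$ used in this section, since $\Gamma$ is generated by the lattice together with the rotation $z\mapsto e^{i\pi/3}z$. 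Here $\theta=\pi/3$ is not a multiple of $2\pi$, so the hypothesis of Lemma \ref{accion} on $T$ is satisfied.

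With $f$, $\pi$ and $F$ in place and satisfying $f\pi=\pi F$, Lemma \ref{accion} yields at once that $f$ has the rate. The only step that requires any thought is the verification that the concrete quotient projection $\pi:T^2\to T^2/\sim$ appearing in Lemma \ref{accion} coincides with the six-fold branched covering $\pi:T^2\to S^2$ set up at the beginning of the section; but this is immediate from the fact that both are quotients of $\C$ by the same group $\Gamma$, rewritten as the action of $\varphi=\langle T\rangle$ on the intermediate torus $\C/\langle 1,e^{i\pi/3}\rangle$. The main (minor) obstacle is bookkeeping: confirming that the six-fold covering decomposes as lattice quotient followed by $\varphi$-quotient, and that the branching orders $(2,3,6)$ at the images $p,q,r$ are consistent with the orders of the stabilizers of $b_1,\{b_2,b_3,b_4\},\{b_5,b_6\}$ under $\varphi$, which is exactly what is encoded in the signature.
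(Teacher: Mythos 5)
Your argument is essentially identical to the paper's: the preceding lemmas give a lift $F:T^2\to T^2$ with $\pi F=f\pi$, and the theorem then follows by invoking Lemma~\ref{accion} with $T(z)=e^{i\pi/3}z$ and the quotient projection $p:\R^2\to\T^2=\C/\langle z\mapsto z+e^{i\pi/3},\,z\mapsto z+1\rangle$. Your added verification that $T$ preserves the hexagonal lattice and that the $\varphi$-quotient of $\T^2$ recovers $\C/\Gamma$ is correct and just spells out what the paper leaves implicit.
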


\begin{proof} We are again in the hypothesis of Lemma \ref{accion} with $T(z)=e^{i\frac{\pi}{3}}z$ and  $p: \R^2 \to  \T^2=\C/<z\mapsto z+e^{i\frac{\pi}{3}},
z\mapsto z+1>$ the quotient projection.
 
\end{proof}

\section{$(\bm{3},\bm{3},\bm{3})$} Let $\Gamma$ be the subgroup of  automorphisms of the complex plane $\C$ generated by $z\mapsto z+e^{i\frac{\pi}{3}}$, $z\mapsto z+1$,  
and $z\mapsto e^{i\frac{2\pi}{3}}z$.   The quotient space $\C/\Gamma$ is
topologically a sphere, 
and the quotient projection $\pi: \C/<z\mapsto z+e^{i\frac{\pi}{3}}, z\mapsto z+i>\to \C/\Gamma$ is a three-fold branched covering of the torus $\T=\C/<z\mapsto z+e^{i\frac{\pi}{3}},
z\mapsto z+i>$ onto the sphere
with three branched points $b_i, i=1,\ldots,3$, that are locally $z\to z^3$.\\

Let $f:S^2\to S^2$ be a branched covering in the class $(3,3,3)$: $P_f=\{p,q,r\}$, $\nu(p)=\nu(q)=\nu(r)=3$.  Assume $\pi(b_1)=p, \pi(b_2)=q,\pi(b_3)=r$. 
Then, $\pi: \tilde X = T^2\backslash \{b_i: i=1,\ldots,3\}\to S^2\backslash \{p,q,r\} = X$ is a normal covering space projection.  Let $Y =   X\backslash f^{-1}(P_f)$
and $\tilde Y = T^2\backslash \pi^{-1}(f^{-1}(P_f))$.

\begin{lemma}  The map $f\pi|_{\tilde Y}:\tilde Y\to X$ lifts to $\tilde X$; that is, there exists $\tilde f:\tilde Y\to \tilde X$ such that $f\pi|_{\tilde Y}=\pi\tilde f$.
 
\end{lemma}

\begin{proof} 

 We need then to ensure that 
$f_*{\pi|_{\tilde Y}}_*(\pi_1 (\tilde Y,\tilde y_0))\subset \pi_* (\pi_1 (\tilde X,\tilde x_0))$. Note that $\pi_1 ( X, x_0)=<a,b>$ 
($a$ and $b$ the homotopy class of two loops based at $x_0$ freely homotopic to $\sigma_p$ and $\sigma_q$ respectively)
 and
$G=\pi_* (\pi_1 (\tilde X,\tilde x_0))= <a^3, b^3,(ab)^3,aba>$.

We make the identification 
$H_1(X,\Z)\simeq \Z [a]\oplus \Z[b]$, and write $[\gamma]=m[a] + n [b] \in H_1(X,\Z), m,n \in \Z$ for $\gamma\in \pi_1(X,x_0) $. Note also 
that
$\gamma \in G$ if and only if $[\gamma]$ verifies $m+n =0 \mod 3$ (See Figure \ref{tabla}).\\

Take $y_0=\pi(\tilde y_0)$ and let $\left<\alpha, \beta,\gamma_1, \ldots, \gamma_n\right>$ generate $\pi_1(Y,y_0)$, where the generators correspond to the punctures at $p,q$ and 
their $n$ 
preimages outside 
$P_f$. That is, each
loop in the generating set is freely homotopic to $\sigma_x$ for some $x\in f^{-1}(P_f)$. \\

Then, ${\pi|_{\tilde Y}}_*(\pi_1 (Y,y_0))= \left< a^3, b^3,(ab)^3,aba,\gamma_i,a\gamma_ia^{-1}, a^2\gamma_ia^{-2}, i=1,\ldots,3 \right>.\\$

We make the identification 
$H_1(Y,\Z)\simeq \Z[\alpha]\oplus\Z[\beta]\oplus \Z[\gamma_1]\oplus \ldots \oplus \Z[\gamma_n]$, and write
$[\gamma]=m[\alpha]+n[\beta] + k_1[\gamma_1]+ \ldots+k_n[\gamma_n] \in H_1(Y,\Z), m,n, k_i \in \Z, i=1,\ldots,n$ for $\gamma\in \pi_1(Y,y_0) $. Note  
that
$\gamma \in H$ if and only if $[\gamma]$ verifies $m+n=0 \mod 3$.\\

Note that every point in $P_f$ is regular, and therefore we have $f_{\#}([x]) = [y]$, where $x$ is either $[\alpha]$ or $[\beta]$, and $y$ is either $a$, $b$, or $[-a-b]$. Besides, every 
critical point $c_i \in f^{-1} (P_f)$  is $3:1$, then,
 $f_{\#} ([\gamma_i])=3[x]$ where $x$ is either  $a$, $b$ or $(-[a]-[b])$. Then,

  $$f_{\#}(m[\alpha]+n[\beta]+ k_1[\gamma_1]+ \ldots+k_n[\gamma_n])=\\$$

  $$(m'+\sum_{j:f(c_j)=p} 3k_j)[a] + (n'+\sum_{j:f(c_j)=q} 3k_j)[b]+(l+ \sum_{j:f(c_j)=d} 3k_j)(-[a]-[b]),$$\noindent where $m+n=m'+n'+l$. 
  It follows that $f_{*}(\gamma)\in G$ if $\gamma \in H$ as wanted.\\

\end{proof}

By the previous lemma there exists a map $\tilde f: \tilde Y \to \tilde X$ such that $\pi \tilde f = f\pi$. Moreover, $\tilde f$ extends continuously to  $F: T^2 \to T^2$ and
$\deg (F)=\deg(f)$ (the proof goes exactly as in Lemma \ref{ext2}).\\

As in the previous cases, one has:

\begin{teo} Let $f:S^2\to S^2$ be a Thurston map with $(3,3,3)$ orbifold.  Then $f$ has the rate.
 
\end{teo}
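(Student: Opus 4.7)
The plan is to follow the template established in the three preceding cases: the lifting lemma cited just above produces the continuous extension $F: T^2 \to T^2$ with $\deg F = \deg f$ and $f\pi = \pi F$, so the only remaining task is to check that the geometric setup of the $(3,3,3)$ orbifold fits the hypotheses of Lemma \ref{accion}. This is essentially a verification step, with no substantive obstacle.

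First I would set $T(z) = e^{i\frac{2\pi}{3}} z$ and take $p:\mathbb{R}^2\to \T^2=\C/\langle z\mapsto z+1,\, z\mapsto z+e^{i\frac{\pi}{3}}\rangle$ to be the standard quotient projection. A short lattice check---using $e^{i\frac{2\pi}{3}}\cdot 1 = e^{i\frac{\pi}{3}}-1$ and $e^{i\frac{2\pi}{3}}\cdot e^{i\frac{\pi}{3}}=-1$---shows that multiplication by $e^{i\frac{2\pi}{3}}$ carries the hexagonal lattice into itself, so $T$ descends to a map $\varphi:\T^2\to\T^2$ satisfying $pT=\varphi p$, as required by Lemma \ref{accion}.

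Next I would observe that the quotient $\pi:\T^2\to\T^2/\langle \varphi\rangle$ is precisely the three-fold branched covering described at the opening of this section: the three fixed points of $\varphi$ correspond to the branch points $b_1,b_2,b_3$, each of local degree $3$, and the target is topologically a sphere. Together with the semiconjugacy $f\pi=\pi F$ coming from the previous lemma, this places us exactly in the situation covered by Lemma \ref{accion}. Since $\theta=\frac{2\pi}{3}\neq 2k\pi$ and $|\deg f|>1$, the lemma applies directly and yields that $f$ has the rate, completing the proof.
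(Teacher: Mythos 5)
Your proposal takes exactly the same approach as the paper: set $T(z)=e^{i\frac{2\pi}{3}}z$, take $p$ the quotient onto the hexagonal torus, invoke the continuous extension $F$ from the preceding lemma, and apply Lemma \ref{accion}. The only difference is that you spell out the (correct) lattice-invariance check that the paper leaves implicit, which is a harmless and welcome addition.
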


\begin{proof} We are again in the hypothesis of Lemma \ref{accion} with $T(z)= e^{i\frac{2\pi}{3}z}$ and  $p: \R^2 \to  \T^2=\C/<z\mapsto z+e^{i\frac{\pi}{3}},
z\mapsto z+1>$ the quotient projection.\\
 
\end{proof}

\newpage


\begin{thebibliography}{Poinc}


\bibitem[BFGJ]{bfgj} {\ R.F Brown, M. Furi, L. Gorniewicz, B. Jiang. }{\ Handbook of Topological Fixed Point Theory.}{\ Springer Netherlands}, {2005}.\\


\bibitem[BM]{bm} {\ M.Bonk, D.Meyer. }{\ Expanding Thurston maps.}{\ ISBN-10: 0-8218-7554-X
ISBN-13: 978-0-8218-7554-4}  {2017}.\\

\bibitem[CMR]{cmr}{\ D. Childers, J.C. Mayer, J.T. Rogers Jr. }{\ Indecomposable continua and the Julia sets
of polynomials, II}{\ Topology and its Applications 153 (2006) 1593} - {1602}\\

\bibitem[DH]{dh}{\ A. Douady, J.H. Hubbard. }{\ A proof of Thurston's topological characterization of rational functions}{\ Acta Math., 171 (1993), 263-297}\\


\bibitem[G]{alejo}{\ A.Garc\'ia. }{\ Fixed points for branched covering maps of the plane}{\ Fundamenta Mathematicae 254 (2021), 1-14}{DOI: 10.4064/fm765-4-2020}\\

\bibitem[H]{hubbard}{\ J.H. Hubbard.}{Surface Homeomorphisms and Rational Functions, Volume 2.}{Ithaca, NY: Matrix Editions}\\

\bibitem[IPRX1]{iprx1} {\ J. Iglesias, A. Portela, A. Rovella. and J. Xavier.}{%
\ Dynamics of covering maps of the annulus I: semiconjugacies.}{\ Math. Z. DOI 10.1007/s00209-016-1653-6}\\


\bibitem[IPRX2]{iprx2} {\ J. Iglesias, A. Portela, A. Rovella. and J. Xavier.}{%
\ Dynamics of annulus maps II: Periodic points for coverings.}{\ Online in Fundamenta Math. DOI: 10.4064/fm89-6-2016}\\


\bibitem[IPRX3]{iprx3} {\ J. Iglesias, A. Portela, A. Rovella. and J. Xavier.}{%
\ Dynamics of annulus maps III: completeness.}{\  Nonlinearity 29(9):2641-2656 (2016) .}\\


\bibitem[IPRX4]{iprx4} {\ J. Iglesias, A. Portela, A. Rovella, J. Xavier.} {\it Sphere branched coverings and the growth rate inequality. }{\ Nonlinearity.{\bf 33} 4613(2020)}\\


\bibitem[IPRX5]{iprx5} {\ J. Iglesias, A. Portela, A. Rovella, J. Xavier.} {\it A branched covering of degree $2$ of the sphere with a completely invariant indecomposable
continuum. }{\ Preprint. }\\

\bibitem[H]{hatcher} {\ A. Hatcher.} {Algebraic Topology}{\ Cambridge University Press, Cambridge, (2002).}\\

\bibitem[HR]{lhc} {\ L. Hernandez-Corbato, F. Ruiz del Portal.} {Fixed point indices of planar continuous maps}{\ Disc. and Cont. Dyn. Sys. 35(7):2979-2995 (2015).}\\


 
\bibitem[K]{kur} {\ C. Kuratowski.}{%
\ Topologie II.}{\ Monografie Matematyczne, XXI (1950).}\\


\bibitem[L]{Li} {\ Z. Li.} {Periodic points and the measure of maximal entropy of an expanding Thurston map}{\ Trans. Amer. Math. Soc. 368 (2016), 8955-8999}\\

 

\bibitem[MR]{mr} {\ J.C. Mayer, J.T. Rogers Jr.} {Indecomposable continua and the Julia sets of polynomials}{\ Proc. Amer. Math.
Soc. 117 (1993) 795–802.}\\





\bibitem[S]{shub2} {M. Shub.}{\ All, most, some differentiable dynamical systems.}{\ Proceedings of the International Congress of Mathematics, Madrid, Spain, European Math. Society}
(2006), 99-120.\\


 


\end{thebibliography}
\end{document}